\documentclass[11pt,oneside,reqno]{amsart}
\usepackage{amsmath,amsfonts,amssymb,amsthm,mathrsfs}
\usepackage{cite}
\title[Logarithmic Velocity Alignment]{Logarithmic Velocity Alignment on Riemannian Manifolds}

\author[H. Ahn]{Hyunjin Ahn}
\address[Hyunjin Ahn]{\newline Department of Data Technology \newline Myongji University, Seoul 03674, Republic of Korea}
\email{ahj92@mju.ac.kr}

\author[W. Shim]{Woojoo Shim$^*$}
\address[Woojoo Shim]{\newline Department of Mathematics Education \newline Kyungpook National University, Daegu 41566, Republic of Korea}
\email{wjshim@knu.ac.kr}

\subjclass[2020]{34C40, 34D05, 53C21, 93A16}

\keywords{Bounded nonpositive sectional curvature, logarithmic velocity coupling, Riemannian manifolds, velocity alignment}

\thanks{Acknowledgment: The work of H. Ahn was supported by the National Research Foundation of Korea (NRF) grant funded by the Korea government (MSIT) (2022R1C12007321).}
 
\thanks{$^*$ Corresponding author}

\newtheorem{theorem}{Theorem}[section]
\newtheorem{lemma}{Lemma}[section]
\newtheorem{corollary}{Corollary}[section]

\newtheorem{remark}{Remark}[section]

\newtheorem{definition}{Definition}[section]

\newcommand{\vast}{\bBigg@{4}}
\newcommand{\Vast}{\bBigg@{5}}

\begin{document}
	
	\date{\today}
	
\begin{abstract}
We introduce a logarithmic velocity alignment model for collective motion on
Riemannian manifolds. The interaction law is defined by the covariant time
derivative of logarithmic displacement vectors and therefore provides an
intrinsic geometric analogue of pairwise velocity-difference coupling. Under
two-sided nonpositive sectional-curvature bounds and a compatibility condition
between the interaction kernel and the curvature-induced growth of logarithmic
terms, we derive a kinetic-energy dissipation estimate and prove
interaction-weighted asymptotic velocity alignment, assuming that the
logarithmic interactions remain globally well-defined and that transported
velocity discrepancies have uniformly controlled time variation. In hyperbolic
space, both assumptions are verified directly from the geometry and the energy
estimates, yielding an unconditional interaction-weighted alignment result
within the stated class of global solutions.
\end{abstract}
	
	\maketitle

	%\tableofcontents

	\section{Introduction} \label{sec:1}
	\setcounter{equation}{0}
Collective dynamics arise in many interacting-agent systems, including
flocking, swarming, biological aggregation, synchronization, and opinion
dynamics. Representative models and analytical frameworks can be found in
\cite{C-S,G-P,H-K,H-K-S,Ku2,M-E,T-B-L,Wi1}. When the state space is a
Riemannian manifold, however, even the basic notion of a velocity difference
requires a geometric choice because velocities attached to different agents
belong to different tangent spaces.

This paper introduces and analyzes a logarithmic velocity alignment model on
Riemannian manifolds. Rather than comparing velocities at distinct tangent
spaces directly through a prescribed transport rule, the alignment mechanism is
defined by the covariant time variation of logarithmic displacement vectors
between agents. This choice is geometrically natural: logarithmic displacement vectors encode
intrinsic relative positions, while their covariant time derivatives encode
relative motion in the corresponding tangent spaces. Curvature therefore enters
the interaction law itself rather than only the subsequent estimates.
Before presenting the model, we fix the geometric notation used throughout.

Let $({\mathcal M},g)$ be a complete, connected, smooth $d$-dimensional
Riemannian manifold without boundary, and let $\nabla$ denote the Levi--Civita
connection on ${\mathcal M}$. For $x\in{\mathcal M}$, we write
$T_x{\mathcal M}$ for the tangent space at $x$ and $T{\mathcal M}$ for the
tangent bundle. If $v\in T_x{\mathcal M}$, then
\[
\|v\|_x:=\sqrt{g_x(v,v)}.
\]
The geodesic distance between two points $x,y\in{\mathcal M}$ is denoted by
$d(x,y)$.
We next recall the convention for parallel transport. Suppose that
$x,y\in{\mathcal M}$ are connected by a unique length-minimizing geodesic
\[
\eta_{xy}:[0,1]\to{\mathcal M},
\quad
\eta_{xy}(0)=y,
\quad
\eta_{xy}(1)=x.
\]
For $w\in T_y{\mathcal M}$, let $W$ be the vector field along $\eta_{xy}$
satisfying
\[
\nabla_{\dot\eta_{xy}(s)}W(s)=0,
\quad
W(0)=w.
\]
Then, the parallel transport of $w$ from $y$ to $x$ along $\eta_{xy}$ is defined
by
\[
P_{xy}w:=W(1),
\]
where
\[
P_{xy}:T_y{\mathcal M}\to T_x{\mathcal M}.
\]
Since parallel transport preserves the Riemannian metric, for
$w,w'\in T_y{\mathcal M}$, we have
\begin{equation*}
	P_{xy}P_{yx}=\mathrm{Id}_{T_x{\mathcal M}},
	\quad
	\|P_{xy}w\|_x=\|w\|_y,
	\quad
	g_x(P_{xy}w,P_{xy}w')=g_y(w,w'),
\end{equation*}
where $\mathrm{Id}_{T_x{\mathcal M}}$ is the identity map on
$T_x{\mathcal M}$.
We also use the exponential and logarithm maps. If $\gamma$ is the geodesic
satisfying
\[
\gamma(0)=x,
\quad
\dot\gamma(0)=v\in T_x{\mathcal M},
\]
then
\[
\exp_x v:=\gamma(1)
\]
whenever the right-hand side is defined. Conversely, if $y$ belongs to the
injectivity domain of $x$, then
\[
\log_x y:=\exp_x^{-1}(y)\in T_x{\mathcal M}.
\]
Equivalently, if $\gamma$ is the unique length-minimizing geodesic from $x$ to
$y$, then
\[
\log_x y=\dot\gamma(0).
\]
We denote by $\operatorname{Cut}(x)$ the cut locus of $x$. Thus, outside the cut
locus, the logarithm map based at $x$ is single-valued and smooth. In
particular, if $y\notin\operatorname{Cut}(x)$, then the geometric objects
$\log_x y$ and the parallel transport along the unique minimizing geodesic from
$x$ to $y$ are well-defined.
In what follows, the injectivity radius of ${\mathcal M}$ is denoted by
\[
\operatorname{inj}({\mathcal M})
:=
\inf_{x\in{\mathcal M}}\operatorname{inj}_x({\mathcal M}).
\]
Therefore, if
\[
d(x,y)<\operatorname{inj}({\mathcal M}),
\]
then $x$ and $y$ are connected by a unique length-minimizing geodesic, and both
$P_{xy}$ and $\log_x y$ are well-defined.
We will frequently employ the following elementary identities for logarithmic
vectors and parallel transport. Let $\tau\in(0,\infty]$, and let
$x_i(\cdot)$ and $x_j(\cdot)$ be $C^1$ curves on ${\mathcal M}$ defined on
$[0,\tau)$ with
\[
\dot x_i(t)=v_i(t)\in T_{x_i(t)}{\mathcal M},
\quad
\dot x_j(t)=v_j(t)\in T_{x_j(t)}{\mathcal M}.
\]
Assume that
\[
\sup_{0\le t<\tau}d(x_i(t),x_j(t))
<
\operatorname{inj}({\mathcal M}).
\]
Then, for all $t\in[0,\tau)$, the following relations hold:
\begin{align*}
P_{x_i x_j}\big(\log_{x_j}x_i\big)
&=-\log_{x_i}x_j,
&
\|\log_{x_i}x_j\|_{x_i}
&=d(x_i,x_j),
\\
\left|\frac{d}{dt}d(x_i,x_j)\right|
&\leq \|P_{x_i x_j}v_j-v_i\|_{x_i}.
\end{align*}
For standard background on Riemannian geometry and for further details on the
geometric identities stated above, we refer the reader to \cite{C1,Ju,P}.

With this notation in hand, we now introduce the logarithmic velocity alignment
model studied in this paper. For curves $x_i(\cdot)$ and $x_j(\cdot)$ on
$\mathcal M$ such that $\log_{x_i(t)}x_j(t)$ is well-defined, we use the
shorthand notation
\[
\nabla_{v_i}\log_{x_i}x_j
:=
\frac{D}{dt}\log_{x_i(t)}x_j(t),
\quad
v_i(t)=\dot x_i(t).
\]
Thus, $\nabla_{v_i}\log_{x_i}x_j$ denotes the covariant time derivative of the
logarithmic displacement vector along the curve $x_i(t)$. In particular, in the
Euclidean case, this quantity reduces to the usual relative velocity
$v_j-v_i$.

For the position--velocity pairs
$\{(x_i,v_i)\}_{i=1}^N$, we consider the following Cauchy problem:
\begin{equation}\label{Main}
	\begin{cases}
		\displaystyle
		\dot{x}_i=v_i,
		\quad t>0,
		\quad i\in[N]:=\{1,\ldots,N\},
		\\[0.2cm]
		\displaystyle
		\nabla_{v_i}v_i
		=
		\frac{\kappa}{N}
		\sum_{j=1}^{N}
		\phi(d(x_i,x_j))
		\nabla_{v_i}
		\log_{x_i}x_j,
		\\[0.2cm]
		\displaystyle
		(x_i(0),v_i(0))
		=
		(x_i^0,v_i^0)
		\in T\mathcal M .
	\end{cases}
\end{equation}
Here $N$ is the number of agents, $\kappa>0$ is the coupling strength of the
logarithmic velocity interaction, and
$\phi:[0,\infty)\to[0,\infty)$ is a Lipschitz continuous interaction
kernel.

The first equation in \eqref{Main} is the kinematic relation between position
and velocity. The second equation describes an alignment-type interaction
generated by the covariant time variation of logarithmic displacement vectors.
Since $\log_{x_i}x_j$ represents the intrinsic relative position of agent $j$
with respect to agent $i$, its covariant time derivative provides a geometric
notion of relative velocity measured from the viewpoint of agent $i$. Thus, each
agent adjusts its motion according to the change of the relative displacement
observed in its own tangent space. This gives an intrinsic formulation of a
velocity-alignment interaction through the logarithm map and the Levi--Civita
connection of the underlying manifold. In the Euclidean case, the logarithmic
displacement reduces to $x_j-x_i$, and its time derivative becomes the usual
velocity difference $v_j-v_i$. Hence, \eqref{Main} can be regarded as a natural
geometric extension of pairwise velocity-difference coupling to Riemannian
manifolds.

The analysis of \eqref{Main} is substantially different from the Euclidean
case. The term $\nabla_{v_i}\log_{x_i}x_j$ is determined by the variation of the
geodesic joining $x_i$ and $x_j$, and therefore reflects the
differential-geometric structure of the underlying manifold. This creates
several difficulties. The logarithm map may fail to be globally well-defined due
to the cut locus, velocities at different points lie in different tangent
spaces, and curvature affects both the sign and the strength of the energy
dissipation. In particular, nonpositive curvature yields a dissipative
structure, while positive curvature may destroy this monotonicity. Moreover, since the
size of the logarithmic velocity interaction may grow with the distance between
agents, the interaction kernel must be chosen carefully to control this
geometric growth. These features make it necessary to combine transported
velocity discrepancies with geometric estimates involving geodesic variations
and curvature.

\vspace{0.1cm}

We next introduce the notion of interaction-weighted velocity alignment.

\begin{definition}\label{D1.1}
	\emph{(Interaction-weighted asymptotic velocity alignment)}
	Let $\{(x_i(t),v_i(t))\}_{i=1}^N$ be a global solution to \eqref{Main} such that
	$P_{x_i(t)x_j(t)}$ is well-defined for all $i,j\in[N]$ and all $t\ge0$.
	We say that the solution exhibits interaction-weighted asymptotic velocity
	alignment if
	\[
	\lim_{t\to\infty}
	\max_{i,j\in[N]}
	\phi(d(x_i(t),x_j(t)))
	\|P_{x_i(t)x_j(t)}v_j(t)-v_i(t)\|_{x_i(t)}
	=
	0.
	\]
\end{definition}

The quantity
$
\|P_{x_i(t)x_j(t)}v_j(t)-v_i(t)\|_{x_i(t)}
$
measures the velocity discrepancy between agents $i$ and $j$ after transporting
$v_j(t)$ to the tangent space at $x_i(t)$. In Definition~\ref{D1.1}, this
discrepancy is weighted by the interaction strength
$\phi(d(x_i(t),x_j(t)))$. Thus, interaction-weighted velocity alignment means
that the velocity discrepancies vanish asymptotically in the
interaction-weighted sense naturally associated with the dissipation structure
of \eqref{Main}. In particular, this notion captures alignment among pairs that
continue to interact with non-negligible strength.
If, for a fixed pair $i,j\in[N]$, the interaction strength is bounded from
below along the corresponding trajectories, namely if there exists
$\phi_{ij,*}>0$ such that
\[
\phi(d(x_i(t),x_j(t)))\ge \phi_{ij,*}
\quad
\text{for all } t\ge0,
\]
then interaction-weighted asymptotic velocity alignment implies
\[
\lim_{t\to\infty}
\|P_{x_i(t)x_j(t)}v_j(t)-v_i(t)\|_{x_i(t)}
=
0
\]
for this pair. In particular, if such a lower bound holds for every pair $i,j\in[N]$, then,
since the number of agents is finite, interaction-weighted asymptotic velocity
alignment implies the usual asymptotic velocity alignment:
\[
\lim_{t\to\infty}
\max_{i,j\in[N]}
\|P_{x_i(t)x_j(t)}v_j(t)-v_i(t)\|_{x_i(t)}
=
0.
\]

\vspace{0.1cm}

Our main objective is therefore to identify geometric and interaction
conditions under which solutions of \eqref{Main} exhibit the
interaction-weighted asymptotic alignment introduced above.

\vspace{0.1cm}

The rest of this paper is organized as follows.
In Section~\ref{sec:2}, we recall several basic notions from Riemannian
geometry, including the Riemannian curvature tensor, sectional curvature, and
Jacobi fields. We also review Rauch comparison estimates and Barbalat's lemma,
which will be used in the subsequent analysis.
In Section~\ref{sec:3}, we establish the kinetic-energy dissipation estimate,
prove the general interaction-weighted alignment theorem under two \textit{a
priori} assumptions, and then verify those assumptions directly on hyperbolic
space. Section~\ref{sec:4} concludes the paper.

\section{Preliminaries} \label{sec:2}
\setcounter{equation}{0}
This section collects the geometric comparison estimates and the analytic
convergence lemma used below. In particular, Jacobi-field and Rauch comparison
arguments control the logarithmic interaction, while Barbalat's lemma converts
time-integrability of the dissipative quantity into pointwise convergence.

\subsection{Curvature, Jacobi fields, and comparison estimates}\label{sec:2.1}

\begin{definition} \label{D2.1}
	\emph{\cite{Ju,P}}~\emph{(Riemannian curvature
		tensor, sectional curvature, and Jacobi fields)}
	Let $(\mathcal M,g)$ be a Riemannian manifold, and let $\nabla$ be its
	Levi--Civita connection.
	\begin{enumerate}
		\item \emph{(Curvature tensor):}
		For smooth vector fields $X,Y,Z$ on $\mathcal M$, the Riemannian curvature
		tensor is given by
		\[
		R(X,Y)Z
		:=
		\nabla_X\nabla_Y Z
		-
		\nabla_Y\nabla_X Z
		-
		\nabla_{[X,Y]}Z.
		\]
		At each $x\in\mathcal M$, we use the following operator norm:
		\[
		\|R_x\|_{\mathrm{op}}
		:=
		\sup_{\substack{u,v,w\in T_x\mathcal M\\
				\|u\|_x=\|v\|_x=\|w\|_x=1}}
		\|R_x(u,v)w\|_x .
		\]
		
		\vspace{0.2cm}
		
		\item \emph{(Sectional curvature):}
		Let $x\in\mathcal M$ and let $u,w\in T_x\mathcal M$ be linearly
		independent. For the two-dimensional plane
		\[
		\sigma=\operatorname{span}\{u,w\}\subset T_x\mathcal M,
		\]
		the sectional curvature of $\sigma$ is defined by
		\[
		\sec_x(\sigma)
		:=
		\frac{
			g_x\bigl(R(u,w)w,u\bigr)
		}{
			g_x(u,u)g_x(w,w)-g_x(u,w)^2
		}.
		\]
		This value depends only on the plane $\sigma$, and not on the particular
		basis $u,w$.
		
		\vspace{0.2cm}
		
		\item \emph{(Jacobi fields):}
		Let $\gamma:[0,1]\to\mathcal M$ be a geodesic, where $s\in[0,1]$ denotes
		the geodesic parameter. A smooth vector field $J$ along $\gamma$ is called a
		Jacobi field if
		\[
		\nabla_s^2 J
		+
		R(J,\dot\gamma)\dot\gamma
		=
		0,
		\quad s\in[0,1].
		\]
		Here $\dot\gamma=d\gamma/ds$, $\nabla_s:=\nabla_{\dot\gamma}$, and
		$\nabla_s^2J:=\nabla_s(\nabla_sJ)$.
	\end{enumerate}
\end{definition}

The curvature tensor describes the noncommutativity of covariant differentiation.
The sectional curvature extracts from it a scalar curvature assigned to each
two-dimensional tangent plane. Jacobi fields describe infinitesimal deformations
of geodesics and will be used to express variations of logarithmic
displacement vectors.

\begin{definition}\label{D2.2}
	\emph{\cite{P}}~\emph{(Riemannian Hessian)}
	Let $f:\mathcal M\to\mathbb R$ be smooth. The Riemannian Hessian of $f$ at
	$p\in\mathcal M$ is the symmetric bilinear form
	\[
	\operatorname{Hess} f(p):T_p\mathcal M\times T_p\mathcal M\to\mathbb R
	\]
	defined by
	\[
	\operatorname{Hess} f(p)(u,w)
	:=
	g_p\bigl(\nabla_u\operatorname{grad}f,w\bigr),
	\quad
	u,w\in T_p\mathcal M.
	\]
\end{definition}

We next recall the comparison function, which will be employed to state the
comparison estimates in a compact form.

\begin{definition}\label{D2.3}
	\emph{\cite{P}}~\emph{(Comparison function)}
	For $\kappa\in\mathbb R$, define
	\[
	\operatorname{sn}_{\kappa}(r)
	:=
	\begin{cases}
		\displaystyle \frac{\sin(\sqrt{\kappa}\,r)}{\sqrt{\kappa}},
		& \kappa>0, \\[0.3cm]
		\displaystyle r,
		& \kappa=0, \\[0.3cm]
		\displaystyle \frac{\sinh(\sqrt{-\kappa}\,r)}{\sqrt{-\kappa}},
		& \kappa<0.
	\end{cases}
	\]
	Equivalently, $\operatorname{sn}_{\kappa}$ is the solution of
	\[
	y''+\kappa y=0,
	\quad
	y(0)=0,
	\quad
	y'(0)=1.
	\]
\end{definition}

The following comparison estimate gives upper and lower bounds for the Hessian
of the distance function under two-sided sectional curvature bounds.

\begin{lemma}\label{L2.1}
	\emph{\cite{P}}~\emph{(Hessian comparison)}
	Let $p\in\mathcal M$ and set
	\[
	r(q):=d(p,q).
	\]
	Assume that the sectional curvature of $(\mathcal M,g)$ satisfies
	\[
	\kappa_1\leq \operatorname{sec}\leq \kappa_2.
	\]
	Let $q\notin\{p\}\cup\operatorname{Cut}(p)$, and assume, if $\kappa_2>0$, that
	\[
	r(q)<\frac{\pi}{\sqrt{\kappa_2}}.
	\]
	Then, for every $w\in T_q\mathcal M$ with
	\[
	g_q(w,\operatorname{grad}r)=0,
	\]
	one has
	\[
	\frac{\operatorname{sn}_{\kappa_2}'(r(q))}
	{\operatorname{sn}_{\kappa_2}(r(q))}
	\|w\|_q^2
	\leq
	\operatorname{Hess} r(q)(w,w)
	\leq
	\frac{\operatorname{sn}_{\kappa_1}'(r(q))}
	{\operatorname{sn}_{\kappa_1}(r(q))}
	\|w\|_q^2 .
	\]
\end{lemma}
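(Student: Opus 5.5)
The plan is to reduce the Hessian of $r$ to the index form of a Jacobi field and then invoke the Rauch comparison theorem in its index-form version.

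Let $\gamma:[0,\ell]\to\mathcal M$ be the unit-speed minimizing geodesic from $p$ to $q$, where $\ell=r(q)$. Since $q\notin\{p\}\cup\operatorname{Cut}(p)$, the function $r$ is smooth near $q$. Moreover, $\operatorname{grad} r(q)=\dot\gamma(\ell)$, and $\gamma$ has no conjugate points on $(0,\ell]$.

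Fix $w\in T_q\mathcal M$ with $g_q(w,\dot\gamma(\ell))=0$. Let $J$ be the unique Jacobi field along $\gamma$ with $J(0)=0$ and $J(\ell)=w$; it exists because there are no conjugate points. Since $J$ is Jacobi, $g(J,\dot\gamma)$ is affine in $s$, and it vanishes at both ends, so $J\perp\dot\gamma$ throughout.

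\textbf{Step 1: the Hessian as an index form.} I will show
\[
\operatorname{Hess} r(q)(w,w)=g_q(\nabla_sJ(\ell),J(\ell))=I(J,J):=\int_0^\ell\bigl(\|\nabla_sJ\|^2-g(R(J,\dot\gamma)\dot\gamma,J)\bigr)\,ds .
\]
To do this, take the geodesic variation $\gamma_u(s)=\exp_p(s\,v(u)/\ell)$, where $v(u)=\log_p c(u)$ and $c(0)=q$, $\dot c(0)=w$. Then $r(c(u))$ is the length of $\gamma_u$. The second variation of length at $u=0$ gives the first equality; the boundary term coming from $\nabla_u\dot c$ drops out because $c$ can be taken to be a geodesic. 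The second equality follows by integrating by parts with the Jacobi equation.

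\textbf{Step 2: upper bound via $\kappa_1$.} Let $E$ be the parallel unit field along $\gamma$ with $E(\ell)=w/\|w\|$, and set $W(s)=\frac{\operatorname{sn}_{\kappa_1}(s)}{\operatorname{sn}_{\kappa_1}(\ell)}\,\|w\|\,E(s)$. This is valid since $\operatorname{sn}_{\kappa_1}(\ell)>0$: take $\kappa_1\le\kappa_2$, and when $\kappa_1>0$ we have $\ell<\pi/\sqrt{\kappa_2}\le\pi/\sqrt{\kappa_1}$. The field $W$ has the same endpoints as $J$. By the index lemma (Jacobi fields minimize the index form when there are no conjugate points), $I(J,J)\le I(W,W)$. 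Using $g(R(E,\dot\gamma)\dot\gamma,E)\ge\kappa_1$, we get
\[
I(W,W)\le\frac{\|w\|^2}{\operatorname{sn}_{\kappa_1}(\ell)^2}\int_0^\ell\bigl(\operatorname{sn}_{\kappa_1}'^2-\kappa_1\operatorname{sn}_{\kappa_1}^2\bigr)ds .
\]
Integrating by parts with $\operatorname{sn}''=-\kappa_1\operatorname{sn}$, the integral equals $\operatorname{sn}_{\kappa_1}\operatorname{sn}_{\kappa_1}'(\ell)$. This gives the stated upper bound.

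\textbf{Step 3: lower bound via $\kappa_2$.} This is the main obstacle, because the index lemma only yields upper bounds. Here I will use the Rauch comparison argument. Let $f=\|J\|$, which is positive on $(0,\ell]$. Writing $J=fU$ with $\|U\|=1$, a computation using the Jacobi equation, the bound $\sec\le\kappa_2$, and Cauchy--Schwarz gives $f''+\kappa_2 f\ge0$ on $(0,\ell]$, with $f(0)=0$ and $f'(0)=\|\nabla_sJ(0)\|$.

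Then $(f'\operatorname{sn}_{\kappa_2}-f\operatorname{sn}_{\kappa_2}')'=\operatorname{sn}_{\kappa_2}(f''+\kappa_2f)\ge0$, and this quantity vanishes at $s=0$. This uses $\operatorname{sn}_{\kappa_2}>0$ on $(0,\ell]$, which holds by the assumption $\ell<\pi/\sqrt{\kappa_2}$. Hence $f'/f\ge\operatorname{sn}_{\kappa_2}'/\operatorname{sn}_{\kappa_2}$ at $\ell$.

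Finally,
\[
\operatorname{Hess}r(q)(w,w)=g(\nabla_sJ,J)(\ell)=f f'(\ell)\ge\frac{\operatorname{sn}_{\kappa_2}'(\ell)}{\operatorname{sn}_{\kappa_2}(\ell)}\|w\|^2 .
\]
The delicate points are the differential inequality for $f$ near $s=0$, where $f$ vanishes (handled by working on $(0,\ell]$ and taking limits), and keeping track of the requirement that $\operatorname{sn}_{\kappa_2}$ stays positive.
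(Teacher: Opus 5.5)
Your proof is correct. The paper gives no proof of its own for this lemma; it quotes it from Petersen's book, where the standard derivation goes through the radial curvature (Riccati) equation $\nabla_{\partial_r}S+S^2=-R(\cdot,\partial_r)\partial_r$ for $S=\nabla\operatorname{grad}r$. There $S$ is compared with the model solution $\operatorname{sn}_\kappa'/\operatorname{sn}_\kappa$ of $\lambda'+\lambda^2=-\kappa$, which has the same $1/r$ behaviour at $r=0$.

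You take the classical Jacobi-field route instead. You identify $\operatorname{Hess}r(q)(w,w)$ with $g(\nabla_sJ(\ell),J(\ell))=I(J,J)$, obtain the upper bound from the index lemma with the test field $\operatorname{sn}_{\kappa_1}(s)\|w\|E(s)/\operatorname{sn}_{\kappa_1}(\ell)$, and obtain the lower bound from the Sturm-type inequality $f''+\kappa_2 f\ge0$ for $f=\|J\|$, which is the core of Rauch's theorem.

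The Riccati argument handles both inequalities with a single comparison lemma and passes directly to Laplacian comparison by taking traces. Its cost is handling the singular initial condition and eigenvalues that need not be smooth. Your argument is asymmetric: the index lemma yields only the upper bound, so the lower bound needs a separate ODE step. In exchange it uses only the second variation formula and the Jacobi equation---the same index-form machinery the paper uses in Lemma~\ref{L3.1}. It also shows exactly where $q\notin\operatorname{Cut}(p)$ enters (existence of $J$, validity of the index lemma, $f>0$ on $(0,\ell]$) and where $r(q)<\pi/\sqrt{\kappa_2}$ enters (positivity of $\operatorname{sn}_{\kappa_1}(\ell)$ and of $\operatorname{sn}_{\kappa_2}$ on $(0,\ell]$).

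The details you leave implicit are routine. The case $w=0$ is trivial. The function $h:=f'\operatorname{sn}_{\kappa_2}-f\operatorname{sn}_{\kappa_2}'$ satisfies $h(0^+)=0$ because $|f'|\le\|\nabla_sJ\|$ stays bounded near $s=0$.
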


We will also use a Rauch-type estimate for the differential of the exponential
map. This estimate is useful when controlling differential quantities associated
with logarithmic vectors.

\begin{lemma}\label{L2.2}
	\emph{\cite{C1}}~\emph{(Rauch comparison for the exponential map)}
	Suppose that $\operatorname{sec}\leq K$. Let $x\in\mathcal M$ and
	$u\in T_x\mathcal M\setminus\{0\}$, and set
	\[
	r:=\|u\|_x<\operatorname{inj}_x\mathcal M.
	\]
	If $K>0$, assume in addition that
	\[
	r<\frac{\pi}{\sqrt K}.
	\]
	Then, for every $\xi\in T_x\mathcal M$ satisfying
	\[
	\langle \xi,u\rangle_x=0,
	\]
	we have
	\[
	\left\|d_u\exp_x(\xi)\right\|_{\exp_x(u)}
	\geq
	\frac{\operatorname{sn}_{K}(r)}{r}\|\xi\|_x.
	\]
	Moreover,
	\[
	\left\|d_u\exp_x\left(\frac{u}{r}\right)\right\|_{\exp_x(u)}=1.
	\]
	Consequently, for every $\xi\in T_x\mathcal M$,
	\[
	\left\|d_u\exp_x(\xi)\right\|_{\exp_x(u)}
	\geq
	\min\left\{1,\frac{\operatorname{sn}_{K}(r)}{r}\right\}\|\xi\|_x.
	\]
	The estimates at $u=0$ are understood by continuity.
\end{lemma}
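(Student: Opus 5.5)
The statement is classical, so I would prove it with Jacobi fields along the radial geodesic rather than invoke anything heavier. Fix $u\neq0$ with $r=\|u\|_x<\operatorname{inj}_x\mathcal M$, and let $\gamma(s):=\exp_x(su/r)$ for $s\in[0,r]$ be the unit-speed geodesic. For $\xi\in T_x\mathcal M$, the standard variation formula gives
\[
d_u\exp_x(\xi)=\tfrac1r J(r),
\]
where $J$ is the Jacobi field along $\gamma$ with $J(0)=0$ and $\nabla_sJ(0)=\xi$. This comes from the geodesic variation $(s,t)\mapsto\exp_x\bigl(s(u+t\xi)/r\bigr)$.

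\textbf{Radial direction.} Take $\xi=u/r$. Then $J(s)=s\dot\gamma(s)$, so $J(r)=r\dot\gamma(r)$ and $\|d_u\exp_x(u/r)\|=\|\dot\gamma(r)\|=1$. This is the Gauss lemma in this special case.

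\textbf{Orthogonal directions.} If $\langle\xi,u\rangle=0$, then $g(J,\dot\gamma)$ is affine in $s$, vanishes at $0$, and has derivative $0$ at $0$. Hence $J\perp\dot\gamma$ for all $s$. I then compare $f(s):=\|J(s)\|$ with $\operatorname{sn}_K(s)$. Where $J\neq0$, differentiating twice gives
\[
f''\ \ge\ \frac{\|\nabla_sJ\|^2\|J\|^2-g(\nabla_sJ,J)^2}{\|J\|^3}-\frac{g(R(J,\dot\gamma)\dot\gamma,J)}{\|J\|}\ \ge\ -Kf,
\]
using Cauchy--Schwarz and $\sec\le K$ on the plane spanned by $J$ and $\dot\gamma$.

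With $f(0)=0$ and $f'(0^+)=\|\xi\|$, a Sturm argument shows that $(f'\operatorname{sn}_K-f\operatorname{sn}_K')'=\operatorname{sn}_K(f''+Kf)\ge0$ on $(0,r]$, provided $\operatorname{sn}_K>0$ there. Positivity holds because $r<\pi/\sqrt K$ when $K>0$, and it is automatic otherwise. Since the bracket vanishes at $0^+$, the ratio $f/\operatorname{sn}_K$ is nondecreasing and tends to $\|\xi\|$ as $s\to0$. Thus $f(r)\ge\operatorname{sn}_K(r)\|\xi\|$, and dividing by $r$ gives the claimed bound. The differentiation requires $f>0$ on $(0,r]$. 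Here is where $r<\operatorname{inj}_x\mathcal M$ enters: $\exp_x$ is nonsingular along the segment, so $J$ has no zero there. Alternatively, the ratio monotonicity itself prevents a first zero before $\pi/\sqrt K$.

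\textbf{General $\xi$.} Split $\xi=a\,u/r+\xi^\perp$. By linearity, $J=a s\dot\gamma+J^\perp$, and the two pieces are orthogonal for all $s$ by the previous step. Therefore
\[
\|d_u\exp_x\xi\|^2=a^2+\|d_u\exp_x\xi^\perp\|^2\ge a^2+\Bigl(\tfrac{\operatorname{sn}_K(r)}{r}\Bigr)^2\|\xi^\perp\|^2\ge\min\Bigl\{1,\tfrac{\operatorname{sn}_K(r)}{r}\Bigr\}^2\|\xi\|^2 .
\]
The case $u=0$ follows by continuity, since $d_0\exp_x=\mathrm{Id}$.

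\textbf{Main obstacle.} The delicate step is the comparison argument for $f=\|J\|$. It needs care at $s=0$, where $f$ is not smooth and $f'(0^+)$ must be read as $\|\nabla_sJ(0)\|$, and it needs justification that $f$ does not vanish on $(0,r]$. I would handle both by working with the ratio $f/\operatorname{sn}_K$ and a limiting argument as $s\to0^+$.
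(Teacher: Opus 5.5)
Your proof is correct. The paper does not prove this lemma; it quotes it from \cite{C1}, so your proposal replaces a citation with a self-contained argument. Your argument combines four standard steps:
\begin{enumerate}
\item the Gauss lemma via $J(s)=s\dot\gamma(s)$ for the radial direction;
\item the affine function $g(J,\dot\gamma)$, which keeps $J\perp\dot\gamma$ when $\xi\perp u$;
\item the scalar inequality $f''+Kf\ge0$ for $f=\|J\|$, followed by monotonicity of $f/\operatorname{sn}_K$;
\item the orthogonal splitting for general $\xi$.
\end{enumerate}
Together these give the standard elementary proof of the ``easy'' half of Rauch's theorem, the lower bound on Jacobi fields under an upper curvature bound. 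Textbook proofs of Rauch usually go through the index lemma, which is genuinely needed for the reverse comparison under a lower curvature bound. Compared with that, your one-dimensional Sturm argument shows explicitly what is used: only $\sec\le K$ on planes containing $\dot\gamma$, and nonvanishing of $J$ on $(0,r]$. As you note, the hypothesis $r<\operatorname{inj}_x\mathcal M$ is therefore stronger than necessary, since the ratio argument already excludes zeros of $J$ before $\pi/\sqrt K$. The citation, in exchange, gives brevity and the full two-sided comparison.

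One simplification is worth noting. The paper uses this lemma only in Lemma~\ref{L3.2}, with $K=0$. There your argument simplifies further. If $\sec\le0$, then $g(R(J,\dot\gamma)\dot\gamma,J)\le0$ for every Jacobi field, with no orthogonality needed. Hence $f=\|J\|$ is convex on $(0,r]$ whenever $J(0)=0$. Together with $f(0)=0$ and $f'(0^+)=\|\xi\|$, this gives $f(r)\ge r\|\xi\|$, that is, $\|d_\eta\exp_x\xi\|\ge\|\xi\|$ for all $\xi$, without the radial/normal splitting.
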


The above estimates will be invoked later to control geometric terms generated
by the logarithmic velocity interaction in \eqref{Main}.

\subsection{An analytic convergence lemma}\label{sec:2.2}
We finish this section by recalling Barbalat's lemma. This result will be applied
to pass from time-integrability estimates to pointwise convergence of the
transported velocity discrepancies.

\begin{lemma}\label{L2.3}
	\emph{\cite{B}}~\emph{(Barbalat's lemma)}
	Let $f:[0,\infty)\to\mathbb R$ be uniformly continuous. If
	\[
	\int_0^\infty f(s)\,ds
	\]
	exists and is finite, then
	\[
	\lim_{t\to\infty}f(t)=0.
	\]
\end{lemma}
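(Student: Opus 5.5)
This is Barbalat's lemma (Lemma~\ref{L2.3}), a classical result cited from \cite{B}. I would prove it by contradiction, using uniform continuity to turn a persistent failure of convergence into a definite amount of area under the graph. That area then contradicts the Cauchy criterion for the convergent improper integral.

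Suppose $f(t)\not\to0$. Then there are $\varepsilon>0$ and a sequence $t_n\to\infty$ with $|f(t_n)|\ge\varepsilon$; passing to a subsequence, I may assume $t_{n+1}>t_n+1$. By uniform continuity, I choose $\delta\in(0,1)$ such that $|s-t|\le\delta$ implies $|f(s)-f(t)|\le\varepsilon/2$. On each interval $[t_n,t_n+\delta]$ this gives $|f(s)|\ge\varepsilon/2$. Since $f$ is continuous and $|f|>0$ on this connected interval, $f$ cannot change sign there, so it has a fixed sign $\sigma_n\in\{\pm1\}$ and $\sigma_n f(s)\ge \varepsilon/2$ on the whole interval. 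Integrating,
\[
\left|\int_{t_n}^{t_n+\delta} f(s)\,ds\right|\ge \frac{\varepsilon\delta}{2}
\quad\text{for all } n.
\]

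Now I use the hypothesis on the integral. Set $F(t):=\int_0^t f(s)\,ds$. Since $\lim_{t\to\infty}F(t)$ exists and is finite, the Cauchy criterion gives, for every $\eta>0$, a time $T$ such that $|F(b)-F(a)|<\eta$ for all $b>a\ge T$. I take $\eta=\varepsilon\delta/2$ and $n$ large enough that $t_n\ge T$. Then $|F(t_n+\delta)-F(t_n)|<\varepsilon\delta/2$, which contradicts the previous display. Hence $f(t)\to0$.

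There is no real obstacle in this argument. The only delicate point is that the integral is merely convergent, not necessarily absolutely convergent, so I cannot bound $\int|f|$ directly. Fixing the sign of $f$ on each short interval is exactly what lets the lower bound on $|f|$ transfer to a lower bound on the signed integral, which is what the Cauchy criterion controls.
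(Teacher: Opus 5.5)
Your argument is correct and is the standard contradiction proof of Barbalat's lemma: uniform continuity gives intervals $[t_n,t_n+\delta]$ of fixed length on which $f$ keeps a fixed sign with $|f|\ge\varepsilon/2$. This contradicts the Cauchy criterion for $t\mapsto\int_0^t f(s)\,ds$, and fixing the sign is indeed what handles a merely conditionally convergent integral. The paper gives no proof of its own and simply cites Barbalat's original work, so there is nothing further to compare; your spacing condition $t_{n+1}>t_n+1$ is harmless but not needed.
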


In the alignment analysis, Barbalat's lemma will be applied to quantities of the
form
$
\phi(d(x_i(t),x_j(t)))\|P_{x_i(t)x_j(t)}v_j(t)-v_i(t)\|_{x_i(t)}^2.
$
Once this quantity is shown to be integrable over $[0,\infty)$ and uniformly
continuous in time, Barbalat's lemma yields
\[
\lim_{t\to\infty}
\phi(d(x_i(t),x_j(t)))
\|P_{x_i(t)x_j(t)}v_j(t)-v_i(t)\|^2_{x_i(t)}
=0.
\]
Moreover, under the boundedness assumption on the interaction kernel imposed
below, this implies
\[
\lim_{t\to\infty}
\phi(d(x_i(t),x_j(t)))
\|P_{x_i(t)x_j(t)}v_j(t)-v_i(t)\|_{x_i(t)}
=0.
\]
Thus, Barbalat's lemma will be used to obtain the interaction-weighted velocity
alignment result in Definition~\ref{D1.1}.

\section{Main results} \label{sec:3}
\setcounter{equation}{0}
We now establish the asymptotic alignment results for \eqref{Main}. We first
state the geometric and interaction assumptions, then derive the energy
dissipation mechanism and the general alignment theorem. Finally, in hyperbolic
space, we verify the two auxiliary \textit{a priori} assumptions directly.

\subsection{Sufficient conditions} \label{sec:3.1}
In this subsection, we formulate the sufficient conditions used in the main
alignment theorem. 

\vspace{0.2cm}

\begin{itemize}
	\item $(\mathcal F_1)$
	{(Underlying geometry):}
	Let $({\mathcal M},g)$ be a complete, connected, smooth
	$d$-dimensional Riemannian manifold without boundary. We assume that there
	exists a constant $k\le 0$ such that the sectional curvature of
	$({\mathcal M},g)$ satisfies
	\[
	k \le \sec \le 0.
	\]
	\item $(\mathcal F_2)$
	{(Interaction kernel and coupling strength):}
	The coupling strength satisfies
	\[
	\kappa>0,
	\]
	and the interaction kernel
	$
	\phi:[0,\infty)\to[0,\infty)
	$
	is Lipschitz continuous. Moreover, we assume that
	\[
	M
	:=
	\sup_{r\ge0}
	\Big(
	\Theta_k(r)\phi(r)
	\Big)
	<
	\infty,
	\]
	where
	\[
	\Theta_k(r)
	:=
	\begin{cases}
		\sqrt{|k|}\,r\coth(\sqrt{|k|}\,r),
		& k<0, \\[0.2cm]
		1,
		& k=0.
	\end{cases}
	\]
	Here $\Theta_k(0)$ is understood by continuity.
\end{itemize}

\begin{remark} \label{R3.1}
	\emph{(Comments on $(\mathcal F_1)-(\mathcal F_2)$)}
	The assumptions $(\mathcal F_1)-(\mathcal F_2)$ are imposed for the following
	reasons.
	\begin{enumerate}
		\item
		The assumption $(\mathcal F_1)$ is the main geometric condition behind the
		energy dissipation of the system \eqref{Main}. The nonpositivity of the
		sectional curvature guarantees that the index-form contribution arising from
		the logarithmic velocity interaction has the correct sign. As a consequence,
		the kinetic energy is nonincreasing and one obtains uniform-in-time energy
		estimates. These estimates provide the basic starting point for the long-time
		analysis of the alignment dynamics. If positive sectional curvature is
		allowed, this dissipative structure may break down, and the kinetic energy
		may increase in general. The lower curvature bound $k\le \sec$ is used to
		obtain quantitative comparison estimates for the logarithmic terms.
		
		\vspace{0.2cm}
		
		\item
		The condition $\kappa>0$ in $(\mathcal F_2)$ means that the interaction acts
		in the dissipative direction. The Lipschitz continuity of $\phi$ is imposed to ensure the well-posedness of
		the Cauchy problem \eqref{Main} and to obtain the regularity needed for the
		Barbalat-type argument in the long-time analysis. The bound
		\[
		\sup_{r\ge0}\Theta_k(r)\phi(r)<\infty
		\]
		is used to control the geometric growth of the logarithmic velocity
		interaction. 
		Indeed, under the curvature bound $k\le \sec\le0$, the
		Rauch-type comparison estimates in Lemma \ref{L2.1} and Lemma \ref{L2.2} give
		coefficients of order $\Theta_k(r)$ in the estimates of the logarithmic
		terms $\nabla_{v_i}\log_{x_i}x_j$. Hence, the above bound, together with the uniform speed bound obtained
		from the energy estimate, allows us to control the weighted interaction terms $\phi(d(x_i,x_j))
		\nabla_{v_i}\log_{x_i}x_j$	appearing in the system \eqref{Main}. This control is later employed in the Barbalat-type
		argument for the interaction-weighted alignment result.
	\end{enumerate}
\end{remark}

\subsection{Interaction-weighted asymptotic velocity alignment}
\label{sec:3.2}

In this subsection, we study interaction-weighted asymptotic velocity alignment
under the following two \textit{a priori} assumptions. Throughout this subsection,
let $\{(x_i(t),v_i(t))\}_{i=1}^N$ be a global solution to \eqref{Main}.

\begin{itemize}
	\item $(\mathcal P_1)$
	{(Well-defined logarithmic interactions):}
	For every $i,j\in[N]$ and $t\ge0$, the logarithmic vector
	$\log_{x_i(t)}x_j(t)$ and the parallel transport map
	$P_{x_i(t)x_j(t)}$ are well-defined. Equivalently, we assume that
	\[
	x_j(t)\notin \operatorname{Cut}(x_i(t))
	\quad
	\text{for all } i,j\in[N] \text{ and } t\ge0.
	\]
	
	\item $(\mathcal P_2)$
{(Uniform time regularity of transported velocity discrepancies):}
	There exists a constant $C>0$ such that
	\[
\sup_{t\ge 0}\max_{i,j\in[N]}
\left|
\frac{d}{dt}
\|P_{x_i(t)x_j(t)}v_j(t)-v_i(t)\|_{x_i(t)}^2
\right|
\le C.
	\]
\end{itemize}

The assumption $(\mathcal P_1)$ ensures that all logarithmic interactions and
transported velocity discrepancies appearing in the system are well-defined.
The assumption $(\mathcal P_2)$ will be used later to obtain the uniform
continuity needed in the Barbalat-type argument. These two assumptions are imposed only in the general Riemannian manifold setting. In the specific case where the underlying manifold is the
$d$-dimensional hyperbolic space $\mathbb H^d$, realized as the hyperboloid
model, they will be removed in Section~\ref{sec:3.3} by verifying them directly
from the geometry of $\mathbb H^d$.

\vspace{0.1cm}

Now, we define the kinetic energy of the system by
\[
\mathcal E(t)
:=
\frac12
\sum_{i=1}^N
\|v_i(t)\|_{x_i(t)}^2 .
\]
We first prove that this energy is nonincreasing along solutions.

\begin{lemma} \label{L3.1} \emph{(Kinetic energy dissipation)} 
Let $\{(x_i(t),v_i(t))\}_{i=1}^N$ be a global solution to \eqref{Main}.
Assume that the sufficient conditions $(\mathcal F_1)-(\mathcal F_2)$ hold
and that the \textit{a priori} assumption $(\mathcal P_1)$ holds for the
system \eqref{Main}. Then the kinetic energy $\mathcal E(t)$ satisfies
\[
\frac{d}{dt}\mathcal E(t)
=
-\frac{\kappa}{2N}
\sum_{i,j=1}^N
\phi(d(x_i(t),x_j(t)))
I_{ij}(J_{ij},J_{ij}),
\]
and hence
\[
\frac{d}{dt}\mathcal E(t)
\le
-\frac{\kappa}{2N}
\sum_{i,j=1}^N
\phi(d(x_i(t),x_j(t)))
\|P_{x_i(t)x_j(t)}v_j(t)-v_i(t)\|_{x_i(t)}^2
\le 0.
\]
Here, for $i\ne j$, $J_{ij}$ denotes the Jacobi field along the geodesic
$\gamma_{ij}$ joining $x_i(t)$ and $x_j(t)$ with
\[
J_{ij}(0)=v_i(t),
\quad
J_{ij}(1)=v_j(t),
\]
and $I_{ij}$ denotes the corresponding index form, defined by
\[
I_{ij}(X,Y)
:=
\int_0^1
\left(
\left\langle \nabla_s X,\nabla_s Y\right\rangle_{\gamma_{ij}(s)}
-
\left\langle
R(X,\dot\gamma_{ij})\dot\gamma_{ij},
Y
\right\rangle_{\gamma_{ij}(s)}
\right)\,ds
\]
for vector fields $X,Y$ along $\gamma_{ij}$. The diagonal terms $i=j$ are
understood to be zero. In particular, $\mathcal E(t)$ is nonincreasing in time.
\end{lemma}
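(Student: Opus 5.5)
We plan to differentiate $\mathcal E$ directly, identify each pairwise term with the boundary term of the index form of a Jacobi field, and then use nonpositive curvature to bound the index form from below by the transported velocity discrepancy.

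\textbf{Step 1 (energy derivative).} By metric compatibility of $\nabla$ and the second equation of \eqref{Main},
\[
\frac{d}{dt}\mathcal E
=\sum_i\langle \nabla_{v_i}v_i,v_i\rangle_{x_i}
=\frac{\kappa}{N}\sum_{i,j}\phi(d(x_i,x_j))\,\big\langle \nabla_{v_i}\log_{x_i}x_j,\,v_i\big\rangle_{x_i}.
\]

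\textbf{Step 2 (Jacobi-field representation).} Fix $i\ne j$ and a time $t$. Let $\Gamma(t,s)$, $s\in[0,1]$, be the minimizing geodesic from $x_i(t)$ to $x_j(t)$; it is well-defined and smooth in $t$ by $(\mathcal P_1)$. Then $\partial_s\Gamma(t,0)=\log_{x_i}x_j$, and the variation field $J_{ij}=\partial_t\Gamma$ is a Jacobi field with $J_{ij}(0)=v_i$ and $J_{ij}(1)=v_j$. By torsion-freeness,
\[
\nabla_{v_i}\log_{x_i}x_j=\nabla_t\partial_s\Gamma|_{s=0}=\nabla_s J_{ij}(0).
\]
Symmetrically, $\nabla_{v_j}\log_{x_j}x_i=-\nabla_sJ_{ij}(1)$, because reversing the orientation of the geodesic flips the sign of $\nabla_s$.

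\textbf{Step 3 (symmetrization).} Write the sum over ordered pairs as half of the sum of the $(i,j)$ and $(j,i)$ terms, using $\phi(d(x_i,x_j))=\phi(d(x_j,x_i))$. This gives
\[
\langle \nabla_sJ_{ij}(0),J_{ij}(0)\rangle-\langle\nabla_sJ_{ij}(1),J_{ij}(1)\rangle
=-\big[\langle\nabla_sJ,J\rangle\big]_0^1
=-I_{ij}(J_{ij},J_{ij}).
\]
The last equality follows by integrating by parts and applying the Jacobi equation. This yields the identity $\frac{d}{dt}\mathcal E=-\frac{\kappa}{2N}\sum_{i,j}\phi\, I_{ij}(J_{ij},J_{ij})$.

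\textbf{Step 4 (lower bound on the index form; main step).} Under $(\mathcal F_1)$ we have $\sec\le0$, so $-\langle R(J,\dot\gamma)\dot\gamma,J\rangle\ge0$, hence
\[
I_{ij}(J,J)\ge\int_0^1\|\nabla_sJ\|^2\,ds.
\]
Let $E_k$ be a parallel frame along $\gamma_{ij}$ and write $J=\sum_k a_k(s)E_k$. Then $\|\nabla_sJ\|^2=\sum_k|a_k'|^2$. By Cauchy--Schwarz,
\[
\int_0^1|a_k'|^2\,ds\ge|a_k(1)-a_k(0)|^2,
\]
so
\[
\int_0^1\|\nabla_sJ\|^2\,ds\ge\|J(1)-P_{x_jx_i}J(0)\|_{x_j}^2=\|P_{x_ix_j}v_j-v_i\|_{x_i}^2,
\]
where the last equality uses that parallel transport is an isometry.

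Multiplying by $\phi\ge0$ and $\kappa>0$ gives the stated inequality and hence monotonicity of $\mathcal E$. The main technical care lies in Step 2: one must justify the smoothness of $\Gamma$ in $t$ (this follows from $(\mathcal P_1)$, since $\log$ is smooth away from the cut locus) and track the orientation signs at the two endpoints. The lower curvature bound and the assumption on $\Theta_k\phi$ are not needed for this lemma.
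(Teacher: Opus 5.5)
Your proposal is correct and follows the paper's proof essentially step for step. Both arguments differentiate the energy, symmetrize the double sum, and identify $\nabla_{v_i}\log_{x_i}x_j=\nabla_sJ_{ij}(0)$ and $\nabla_{v_j}\log_{x_j}x_i=-\nabla_sJ_{ij}(1)$ via torsion-freeness. They then integrate by parts with the Jacobi equation to obtain $-I_{ij}(J_{ij},J_{ij})$, and bound the index form below using $\sec\le 0$ together with Cauchy--Schwarz. The only difference is cosmetic: you expand $J_{ij}$ in a parallel frame along $\gamma_{ij}$, which must be orthonormal for $\|\nabla_sJ\|^2=\sum_k|a_k'|^2$ to hold, whereas the paper parallel-transports $J_{ij}(s)$ back to $T_{x_i}\mathcal M$; these are the same computation.
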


\begin{proof}
	By the definition of $\mathcal E(t)$ and the system \eqref{Main}, we have
	\[
	\begin{aligned}
		\frac{d}{dt}\mathcal E(t)
		=
		\sum_{i=1}^N
		\left\langle \nabla_{v_i}v_i,v_i\right\rangle_{x_i}
		=
		\frac{\kappa}{N}
		\sum_{i,j=1}^N
		\phi(d(x_i,x_j))
		\left\langle
		\nabla_{v_i}\log_{x_i}x_j,
		v_i
		\right\rangle_{x_i}.
	\end{aligned}
	\]
	Symmetrizing the double sum gives
	\begin{align}\label{C0}
	\begin{aligned}
		\frac{d}{dt}\mathcal E(t)
		&=
		\frac{\kappa}{2N}
		\sum_{i,j=1}^N
		\phi(d(x_i,x_j))
		\Big(
		\left\langle
		\nabla_{v_i}\log_{x_i}x_j,
		v_i
		\right\rangle_{x_i}
		+
		\left\langle
		\nabla_{v_j}\log_{x_j}x_i,
		v_j
		\right\rangle_{x_j}
		\Big).
	\end{aligned}
	\end{align}
For each $i\ne j$, let $\Gamma_{ij}(t,s)$ be the geodesic variation generated
by the moving endpoints $x_i(t)$ and $x_j(t)$, that is,
\[
\Gamma_{ij}(t,0)=x_i(t),
\quad
\Gamma_{ij}(t,1)=x_j(t).
\]
For fixed $t$, we write
\[
\gamma_{ij}(s):=\Gamma_{ij}(t,s),
\quad
J_{ij}(s):=\partial_t\Gamma_{ij}(t,s).
\]
Then, $J_{ij}$ is a Jacobi field along $\gamma_{ij}$ and satisfies
\[
J_{ij}(0)=v_i,
\quad
J_{ij}(1)=v_j.
\]
Since
\[
\log_{x_i}x_j=\partial_s\Gamma_{ij}(t,0),
\]
the torsion-free property of the Levi--Civita connection gives
\[
\nabla_{v_i}\log_{x_i}x_j
=
\frac{D}{dt}\partial_s\Gamma_{ij}(t,0)
=
\frac{D}{ds}\partial_t\Gamma_{ij}(t,0)
=
\nabla_sJ_{ij}(0).
\]
Similarly, since
\[
\log_{x_j}x_i=-\partial_s\Gamma_{ij}(t,1),
\]
we have
\[
\nabla_{v_j}\log_{x_j}x_i
=
-\frac{D}{dt}\partial_s\Gamma_{ij}(t,1)
=
-\frac{D}{ds}\partial_t\Gamma_{ij}(t,1)
=
-\nabla_sJ_{ij}(1).
\]
Hence, using
\[
J_{ij}(0)=v_i,
\quad
J_{ij}(1)=v_j,
\]
we obtain
\[
\begin{aligned}
	&
	\left\langle
	\nabla_{v_i}\log_{x_i}x_j,
	v_i
	\right\rangle_{x_i}
	+
	\left\langle
	\nabla_{v_j}\log_{x_j}x_i,
	v_j
	\right\rangle_{x_j}
	\\
	&\quad =
	\left\langle
	\nabla_s J_{ij}(0),
	J_{ij}(0)
	\right\rangle_{x_i}
	-
	\left\langle
	\nabla_s J_{ij}(1),
	J_{ij}(1)
	\right\rangle_{x_j}
	\\
	&\quad =
	-
	\left[
	\left\langle
	\nabla_s J_{ij},
	J_{ij}
	\right\rangle_{\gamma_{ij}}
	\right]_{s=0}^{s=1}
	\\
	&\quad =
	-
	\int_0^1
	\frac{d}{ds}
	\left\langle
	\nabla_s J_{ij},
	J_{ij}
	\right\rangle_{\gamma_{ij}(s)}
	\,ds
	\\
	&\quad =
	-
	\int_0^1
	\left(
	\|\nabla_s J_{ij}\|_{\gamma_{ij}(s)}^2
	+
	\left\langle
	\nabla_s^2 J_{ij},
	J_{ij}
	\right\rangle_{\gamma_{ij}(s)}
	\right)
	\,ds.
\end{aligned}
\]
Since $J_{ij}$ is a Jacobi field along $\gamma_{ij}$, it satisfies
\[
\nabla_s^2 J_{ij}
+
R(J_{ij},\dot\gamma_{ij})\dot\gamma_{ij}
=
0.
\]
Thus,
\[
\begin{aligned}
	&
	\left\langle
	\nabla_{v_i}\log_{x_i}x_j,
	v_i
	\right\rangle_{x_i}
	+
	\left\langle
	\nabla_{v_j}\log_{x_j}x_i,
	v_j
	\right\rangle_{x_j}
	\\
	&\quad =
	-
	\int_0^1
	\left(
	\|\nabla_s J_{ij}\|_{\gamma_{ij}(s)}^2
	-
	\left\langle
	R(J_{ij},\dot\gamma_{ij})\dot\gamma_{ij},
	J_{ij}
	\right\rangle_{\gamma_{ij}(s)}
	\right)
	\,ds
	\\
	&\quad =
	-
	I_{ij}(J_{ij},J_{ij}).
\end{aligned}
\]
Therefore, we obtain
\[
\left\langle
\nabla_{v_i}\log_{x_i}x_j,
v_i
\right\rangle_{x_i}
+
\left\langle
\nabla_{v_j}\log_{x_j}x_i,
v_j
\right\rangle_{x_j}
=
-
I_{ij}(J_{ij},J_{ij}).
\]
	Substituting this identity into \eqref{C0} gives
	\[
	\frac{d}{dt}\mathcal E(t)
	=
	-\frac{\kappa}{2N}
	\sum_{i,j=1}^{N}
	\phi(d(x_i,x_j))
	I_{ij}(J_{ij},J_{ij}).
	\]
	Then, it remains to estimate the index form from below. Since $\sec\le0$ by
	$(\mathcal F_1)$, one has
	\[
	I_{ij}(J_{ij},J_{ij})
	\ge
	\int_0^1
	\|\nabla_s J_{ij}(s)\|_{\gamma_{ij}(s)}^2\,ds.
	\]
	Now, we set
	\[
	\widetilde J_{ij}(s)
	:=
	P_{x_i\gamma_{ij}(s)}J_{ij}(s).
	\]
	Since parallel transport preserves norms, we obtain
	\[
	\int_0^1
	\|\nabla_s J_{ij}(s)\|_{\gamma_{ij}(s)}^2\,ds
	=
	\int_0^1
	\left\|
	\frac{d}{ds}\widetilde J_{ij}(s)
	\right\|_{x_i}^2
	\,ds.
	\]
	By the Cauchy--Schwarz inequality,
	\[
	\int_0^1
	\left\|
	\frac{d}{ds}\widetilde J_{ij}(s)
	\right\|_{x_i}^2
	\,ds
	\ge
	\left\|
	\widetilde J_{ij}(1)-\widetilde J_{ij}(0)
	\right\|_{x_i}^2.
	\]
	Since
	\[
	\widetilde J_{ij}(0)=v_i,
	\quad
	\widetilde J_{ij}(1)=P_{x_i x_j}v_j,
	\]
	we get
	\[
	I_{ij}(J_{ij},J_{ij})
	\ge
	\|P_{x_i x_j}v_j-v_i\|_{x_i}^2.
	\]
	Consequently,
	\[
	\frac{d}{dt}\mathcal E(t)
	\le
	-\frac{\kappa}{2N}
	\sum_{i,j=1}^{N}
	\phi(d(x_i,x_j))
	\|P_{x_i x_j}v_j-v_i\|_{x_i}^2
	\le0.
	\]
	This proves the energy dissipation estimate.
\end{proof}

The dissipation estimate immediately yields a uniform velocity bound and
integrability in time of the weighted squared velocity discrepancies.

\begin{corollary}\label{C3.1}\emph{(Energy estimates)}
	Let $\{(x_i(t),v_i(t))\}_{i=1}^N$ be a global solution to the system
	\eqref{Main}. Assume that the sufficient conditions
	$(\mathcal F_1)-(\mathcal F_2)$ hold and that the \textit{a priori}
	assumption $(\mathcal P_1)$ holds for the system \eqref{Main}. Then, for all
	$t\ge0$, we have
	\[
	\max_{i\in[N]}
	\|v_i(t)\|_{x_i(t)}
	\le
	\sqrt{2\mathcal E(0)}.
	\]
	Moreover, one has
	\[
	\sum_{i,j=1}^N\int_0^\infty
	\left(
	\phi(d(x_i(s),x_j(s)))
	\|P_{x_i(s)x_j(s)}v_j(s)-v_i(s)\|_{x_i(s)}^2
	\right)
	\,ds
	\le
	\frac{2N}{\kappa}\mathcal E(0)
	<
	\infty.
	\]
\end{corollary}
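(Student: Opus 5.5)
The plan is to integrate the dissipation inequality of Lemma~\ref{L3.1} in time. Both claims of Corollary~\ref{C3.1} should follow without any further geometric input.

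\emph{Velocity bound.} Lemma~\ref{L3.1} applies under $(\mathcal F_1)$--$(\mathcal F_2)$ and $(\mathcal P_1)$. It shows that $\mathcal E$ is nonincreasing on $[0,\infty)$, so $\mathcal E(t)\le\mathcal E(0)$ for all $t\ge0$. Since every term in $\mathcal E$ is nonnegative, for each fixed $i$ we have
\[
\tfrac12\|v_i(t)\|_{x_i(t)}^2\le \mathcal E(t)\le \mathcal E(0).
\]
Taking square roots and the maximum over $i\in[N]$ gives $\max_i\|v_i(t)\|_{x_i(t)}\le\sqrt{2\mathcal E(0)}$.

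\emph{Integrability.} Fix $T>0$ and integrate the inequality of Lemma~\ref{L3.1} over $[0,T]$:
\[
\mathcal E(T)+\frac{\kappa}{2N}\sum_{i,j=1}^N\int_0^T\phi(d(x_i,x_j))\|P_{x_ix_j}v_j-v_i\|_{x_i}^2\,ds\le\mathcal E(0).
\]
This requires $\mathcal E$ to be $C^1$, which holds because the solution is a classical one and Lemma~\ref{L3.1} gives $\tfrac{d}{dt}\mathcal E$ explicitly. Because $\mathcal E(T)\ge0$, we may drop it, and then multiply by $2N/\kappa$, which is legitimate since $\kappa>0$.

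The integrand is nonnegative because $\phi\ge0$. The partial integrals are therefore monotone in $T$ and uniformly bounded by $\frac{2N}{\kappa}\mathcal E(0)$. Letting $T\to\infty$ by monotone convergence gives the stated bound, and it is finite because the initial data are finite. Diagonal terms vanish since $P_{x_ix_i}=\mathrm{Id}$.

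The only point needing a little care is the measurability and continuity of the integrand. It is continuous in $s$ under $(\mathcal P_1)$, because $\phi$ is Lipschitz and $d$, $P_{x_ix_j}$ depend continuously on the endpoints away from the cut locus. Apart from that the argument is routine.
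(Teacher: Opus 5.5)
Your proposal is correct and follows the paper's route: monotonicity of $\mathcal E$ from Lemma~\ref{L3.1} gives the velocity bound, and integrating the dissipation inequality over $[0,T]$, dropping $\mathcal E(T)\ge0$, and letting $T\to\infty$ gives the integral bound. Your remarks on continuity of the integrand and monotone convergence are extra care that the paper leaves implicit.
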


\begin{proof}
	Lemma~\ref{L3.1} gives
	\[
	\mathcal E(t)\le \mathcal E(0)
	\quad
	\text{for all } t\ge0.
	\]
	Hence, for each $i\in[N]$,
	\[
	\|v_i(t)\|_{x_i(t)}
	\le
	\sqrt{2\mathcal E(t)}
	\le
	\sqrt{2\mathcal E(0)},
	\]
which yields
	\[
	\max_{i\in[N]}
	\|v_i(t)\|_{x_i(t)}
	\le
	\sqrt{2\mathcal E(0)}.
	\]
	Next, by integrating the energy dissipation estimate in Lemma~\ref{L3.1} over
	$[0,t]$, we get
	\[
	\frac{\kappa}{2N}
	\sum_{i,j=1}^N
	\int_0^t
	\phi(d(x_i(s),x_j(s)))
	\|P_{x_i(s)x_j(s)}v_j(s)-v_i(s)\|_{x_i(s)}^2
	\,ds
	\le
	\mathcal E(0)-\mathcal E(t)\leq \mathcal E(0).
	\]
	Then, it follows that
	\[
		\sum_{i,j=1}^N
	\int_0^t
	\phi(d(x_i(s),x_j(s)))
	\|P_{x_i(s)x_j(s)}v_j(s)-v_i(s)\|_{x_i(s)}^2
	\,ds
	\le
	\frac{2N}{\kappa}\mathcal E(0)<\infty.
	\]
	Letting $t\to\infty$ proves the second assertion.
\end{proof}

We are now ready to prove the main alignment result of this subsection.
By combining the energy dissipation estimate with Barbalat's lemma
(Lemma~\ref{L2.3}), we show that interaction-weighted asymptotic velocity
alignment holds under the sufficient conditions $(\mathcal F_1)-(\mathcal F_2)$
and the \textit{a priori} assumptions $(\mathcal P_1)-(\mathcal P_2)$.

\begin{theorem}\label{T3.1}~\emph{(Interaction-weighted asymptotic velocity alignment)}
	Let $\{(x_i(t),v_i(t))\}_{i=1}^N$ be a global solution to the system
	\eqref{Main}. Assume that the sufficient conditions
	$(\mathcal F_1)-(\mathcal F_2)$ hold and that the \textit{a priori}
	assumptions $(\mathcal P_1)-(\mathcal P_2)$ are satisfied.
	Then, the solution exhibits interaction-weighted asymptotic velocity
	alignment. More precisely, one has
	\[
	\lim_{t\to\infty}
	\max_{i,j\in[N]}
	\phi(d(x_i(t),x_j(t)))
	\,
	\|P_{x_i(t)x_j(t)}v_j(t)-v_i(t)\|_{x_i(t)}
	=
	0.
	\]
\end{theorem}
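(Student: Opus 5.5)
The plan is to apply Barbalat's lemma (Lemma~\ref{L2.3}) pair by pair to
\[
f_{ij}(t):=\phi(d(x_i(t),x_j(t)))\,\|P_{x_i(t)x_j(t)}v_j(t)-v_i(t)\|_{x_i(t)}^2,
\qquad i,j\in[N],
\]
and then take a square root, using the boundedness of $\phi$.

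\textbf{Integrability.} Each $f_{ij}$ is nonnegative and continuous. Corollary~\ref{C3.1} bounds the sum of all the integrals $\int_0^\infty f_{ij}\,ds$ by $\frac{2N}{\kappa}\mathcal E(0)$. Hence every single improper integral $\int_0^\infty f_{ij}(s)\,ds$ exists and is finite.

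\textbf{Uniform continuity.} I will show that $f_{ij}$ is Lipschitz in $t$ with a constant independent of $t$. Write $f_{ij}=a_{ij}b_{ij}$, where
\[
a_{ij}(t):=\phi(d(x_i,x_j)), \qquad b_{ij}(t):=\|P_{x_ix_j}v_j-v_i\|_{x_i}^2 .
\]
I need sup bounds on $a_{ij}$, $b_{ij}$ and on their time derivatives.

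\begin{itemize}
\item \emph{Bound on $a_{ij}$.} Since $\Theta_k\ge1$ (because $y\coth y\ge1$), assumption $(\mathcal F_2)$ gives $0\le\phi\le M$.
\item \emph{Bound on $b_{ij}$.} By Corollary~\ref{C3.1} and the isometry of parallel transport, $b_{ij}\le(2\sqrt{2\mathcal E(0)})^2=8\mathcal E(0)$.
\item \emph{Derivative of $a_{ij}$.} $t\mapsto d(x_i(t),x_j(t))$ is Lipschitz: by the identity from the introduction, valid under $(\mathcal P_1)$, its derivative is bounded by $\|P_{x_ix_j}v_j-v_i\|\le 2\sqrt{2\mathcal E(0)}$. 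Composing with the Lipschitz kernel $\phi$ (constant $L_\phi$) shows $a_{ij}$ is Lipschitz with constant $2L_\phi\sqrt{2\mathcal E(0)}$.
\item \emph{Derivative of $b_{ij}$.} Assumption $(\mathcal P_2)$ gives $|\dot b_{ij}|\le C$.
\end{itemize}

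Combining these by the product rule, for $t,s\ge0$,
\[
|f_{ij}(t)-f_{ij}(s)|\le\bigl(8\mathcal E(0)\cdot 2L_\phi\sqrt{2\mathcal E(0)}+MC\bigr)|t-s|.
\]
This gives uniform continuity. Lemma~\ref{L2.3} then yields $f_{ij}(t)\to0$.

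\textbf{Conclusion.} Write $D_{ij}:=\|P_{x_ix_j}v_j-v_i\|_{x_i}$. Since $0\le\phi\le M$,
\[
\phi\, D_{ij}=\sqrt{\phi}\,\sqrt{\phi\, D_{ij}^2}\le\sqrt{M}\,\sqrt{f_{ij}}\to0 .
\]
Taking the maximum over the finitely many pairs gives the statement of Theorem~\ref{T3.1}.

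\textbf{Main obstacle.} The delicate step is the uniform continuity. Assumption $(\mathcal P_2)$ is exactly what removes the need to differentiate the transported discrepancy. What remains is to justify the Lipschitz bound on $t\mapsto d(x_i,x_j)$ when $\phi$ is merely Lipschitz rather than differentiable. I would handle this at the level of difference quotients, using the triangle inequality $|d(x_i(t),x_j(t))-d(x_i(s),x_j(s))|\le d(x_i(t),x_i(s))+d(x_j(t),x_j(s))\le 2\sqrt{2\mathcal E(0)}|t-s|$. This avoids any differentiability issue entirely.
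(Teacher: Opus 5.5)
Your proposal is correct and follows essentially the same route as the paper. The ingredients match: integrability comes from Corollary~\ref{C3.1}, and uniform continuity comes from the Lipschitz kernel, $(\mathcal P_2)$, and the Lipschitz bound on $t\mapsto d(x_i,x_j)$ via the triangle inequality. That last bound is the version you settle on in your final paragraph, which is exactly the paper's, and it is the right choice: the introduction's derivative identity is stated under $\sup_t d<\operatorname{inj}(\mathcal M)$ rather than $(\mathcal P_1)$, and it says nothing at times when $x_i=x_j$. Barbalat and the bound $\phi D_{ij}\le\sqrt{M}\sqrt{f_{ij}}$ then finish the argument. The differences are only cosmetic: you apply Barbalat pair by pair and then take the maximum, whereas the paper applies it directly to the maximum, and you justify $\phi\le M$ explicitly via $\Theta_k\ge1$, whereas the paper simply writes $\|\phi\|_{L^\infty}$.
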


\begin{proof}
Since the maximum of finitely many nonnegative terms is bounded by their sum, Corollary~\ref{C3.1} yields
\[
\int_0^\infty
\max_{i,j\in[N]}
\left(
\phi(d(x_i(t),x_j(t)))
\|P_{x_i(t)x_j(t)}v_j(t)-v_i(t)\|_{x_i(t)}^2
\right)
dt
<
\infty.
\]
Next, we show that the integrand is uniformly continuous. First, by
Corollary~\ref{C3.1}, the velocities are uniformly bounded. Hence, for any
$s,t\ge0$ and $i,j\in[N]$, we have
\[
\begin{aligned}
	&
	\left|
	d(x_i(t),x_j(t))-d(x_i(s),x_j(s))
	\right|
	\\
	&\quad \le
	d(x_i(t),x_i(s))+d(x_j(t),x_j(s))
	\\
	&\quad \le
	\int_s^t
	\left(
	\|v_i(\tau)\|_{x_i(\tau)}
	+
	\|v_j(\tau)\|_{x_j(\tau)}
	\right)
	d\tau
	\\
	&\quad \le
	2\sqrt{2\mathcal E(0)}\,|t-s|.
\end{aligned}
\]
Since $\phi$ is Lipschitz continuous by $(\mathcal F_2)$, it follows
that
\[
t
\mapsto
\phi(d(x_i(t),x_j(t)))
\]
is uniformly continuous on $[0,\infty)$ for every pair $i,j\in[N]$.
On the other hand, the \textit{a priori} assumption $(\mathcal P_2)$ implies
that
\[
t
\mapsto
\|P_{x_i(t)x_j(t)}v_j(t)-v_i(t)\|_{x_i(t)}^2
\]
is uniformly continuous on $[0,\infty)$ for every pair $i,j\in[N]$.
Moreover, both factors are uniformly bounded. Indeed, by $(\mathcal F_2)$,
the interaction kernel $\phi$ is bounded, and by Corollary~\ref{C3.1},
\[
\|P_{x_i(t)x_j(t)}v_j(t)-v_i(t)\|_{x_i(t)}
\le
\|v_j(t)\|_{x_j(t)}
+
\|v_i(t)\|_{x_i(t)}
\le
2\sqrt{2\mathcal E(0)}.
\]
Therefore, the product
\[
t
\mapsto
\phi(d(x_i(t),x_j(t)))
\|P_{x_i(t)x_j(t)}v_j(t)-v_i(t)\|_{x_i(t)}^2
\]
is uniformly continuous on $[0,\infty)$ for every pair $i,j\in[N]$.
Since the maximum is taken over finitely many pairs, the function
\[
t
\mapsto
\max_{i,j\in[N]}
\left(
\phi(d(x_i(t),x_j(t)))
\|P_{x_i(t)x_j(t)}v_j(t)-v_i(t)\|_{x_i(t)}^2
\right)
\]
is also uniformly continuous on $[0,\infty)$.
Therefore, Barbalat's lemma (Lemma~\ref{L2.3}) implies
\[
\lim_{t\to\infty}
\max_{i,j\in[N]}
\phi(d(x_i(t),x_j(t)))
\|P_{x_i(t)x_j(t)}v_j(t)-v_i(t)\|_{x_i(t)}^2
=
0.
\]
Finally, since $\phi$ is bounded by $(\mathcal F_2)$, we have
\begin{align*}
0
&\le
\phi(d(x_i(t),x_j(t)))
\|P_{x_i(t)x_j(t)}v_j(t)-v_i(t)\|_{x_i(t)}
\\
&\le
\sqrt{\|\phi\|_{L^\infty}}
\left(
\phi(d(x_i(t),x_j(t)))
\|P_{x_i(t)x_j(t)}v_j(t)-v_i(t)\|_{x_i(t)}^2
\right)^{1/2}.
\end{align*}
Taking the maximum over $i,j\in[N]$ and letting $t\to\infty$, we obtain
\[
\lim_{t\to\infty}
\max_{i,j\in[N]}
\phi(d(x_i(t),x_j(t)))
\|P_{x_i(t)x_j(t)}v_j(t)-v_i(t)\|_{x_i(t)}
=
0.
\]
This completes the proof.
\end{proof}

We conclude this subsection by recording a uniform estimate for the weighted
logarithmic interactions. The estimate holds in the general Riemannian setting
under $(\mathcal F_1)-(\mathcal F_2)$ and $(\mathcal P_1)$, and it will also
be used in the hyperbolic-space analysis below.

\begin{lemma}\label{L3.2}\emph{(Uniform bound for weighted logarithmic interactions)}
	Let $\{(x_i(t),v_i(t))\}_{i=1}^N$ be a global solution to the system
	\eqref{Main}. Assume that the sufficient conditions
	$(\mathcal F_1)-(\mathcal F_2)$ hold and that the \textit{a priori}
	assumption $(\mathcal P_1)$ holds for the system \eqref{Main}. Then, we have
	\[
	\sup_{t\geq 0}\max_{i,j\in[N]}
	\phi(d(x_i(t),x_j(t)))
	\left\|
	\nabla_{v_i}\log_{x_i(t)}x_j(t)
	\right\|_{x_i(t)}
	\le
	2M\sqrt{2\mathcal E(0)},
	\]
	where $M$ is the constant in $(\mathcal F_2)$.
	In particular, one has
	\[
	\sup_{t\geq 0}\max_{i\in[N]}
	\|\nabla_{v_i}v_i(t)\|_{x_i(t)}
	\le
	2\kappa M\sqrt{2\mathcal E(0)}.
	\]
\end{lemma}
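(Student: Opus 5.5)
The plan is to bound $\nabla_{v_i}\log_{x_i}x_j$ through the Jacobi-field identity from the proof of Lemma~\ref{L3.1}, and then absorb the resulting $\Theta_k$-growth into the kernel via $(\mathcal F_2)$. For $i=j$ the term vanishes, so fix $i\ne j$ and $t\ge0$, and let $r:=d(x_i,x_j)>0$. Let $\gamma_{ij}:[0,1]\to\mathcal M$ be the minimizing geodesic from $x_i$ to $x_j$, so $\|\dot\gamma_{ij}\|=r$. By $(\mathcal P_1)$ and the computation in Lemma~\ref{L3.1},
\[
\nabla_{v_i}\log_{x_i}x_j=\nabla_sJ_{ij}(0),
\]
where $J_{ij}$ is the Jacobi field along $\gamma_{ij}$ with $J_{ij}(0)=v_i$ and $J_{ij}(1)=v_j$. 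The target intermediate bound is
\[
\|\nabla_sJ_{ij}(0)\|_{x_i}\le \Theta_k(r)\bigl(\|v_i\|_{x_i}+\|v_j\|_{x_j}\bigr).
\]
Combined with Corollary~\ref{C3.1} and $\phi(r)\Theta_k(r)\le M$, this gives $\phi(r)\|\nabla_{v_i}\log_{x_i}x_j\|\le 2M\sqrt{2\mathcal E(0)}$. The bound on $\nabla_{v_i}v_i$ then follows from \eqref{Main} by the triangle inequality, since $\frac{\kappa}{N}\sum_j 2M\sqrt{2\mathcal E(0)}=2\kappa M\sqrt{2\mathcal E(0)}$.

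To prove the intermediate bound, I will decompose $J_{ij}$ into tangential and normal parts relative to $\dot\gamma_{ij}$; both parts are again Jacobi fields. The tangential part has the form $(a+bs)\dot\gamma_{ij}$, so its covariant derivative is the difference of the parallel-transported tangential components of $v_j$ and $v_i$. Its norm is therefore at most $\|v_i\|+\|v_j\|$, which is at most $\Theta_k(r)(\|v_i\|+\|v_j\|)$ because $\Theta_k\ge1$. By uniqueness of Jacobi fields with prescribed endpoints (no conjugate points under $\sec\le0$), the normal part splits as $J_1+J_2$. Here $J_1(0)=v_i^\perp$ and $J_1(1)=0$, while $J_2(0)=0$ and $J_2(1)=v_j^\perp$.

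For $J_2$, note that $J_2(s)=d_{s\xi_0}\exp_{x_i}(s\,\nabla_sJ_2(0))$ with $\xi_0=\log_{x_i}x_j$, and that $\nabla_sJ_2(0)\perp\xi_0$. Lemma~\ref{L2.2} with $K=0$ then gives $\|v_j^\perp\|\ge\|\nabla_sJ_2(0)\|$.

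For $J_1$, I use that a Jacobi field vanishing at $x_j$ is the variation field of geodesics emanating from $x_j$. Reparametrizing by arclength from $x_j$ and using $\rho:=d(x_j,\cdot)$, this gives
\[
\nabla_sJ_1(0)=-r\,\nabla_{v_i^\perp}\operatorname{grad}\rho .
\]
This vector is normal to $\dot\gamma_{ij}$. Since $\operatorname{Hess}\rho$ is symmetric and, by Lemma~\ref{L2.1} with $\kappa_1=k$ and $\kappa_2=0$, its eigenvalues on the normal space lie in $[0,\operatorname{sn}_k'(r)/\operatorname{sn}_k(r)]$, we get
\[
\|\nabla_sJ_1(0)\|\le r\,\frac{\operatorname{sn}_k'(r)}{\operatorname{sn}_k(r)}\,\|v_i^\perp\|=\Theta_k(r)\|v_i^\perp\|.
\]
Adding the three contributions yields the intermediate bound.

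I expect the main obstacle to be the $J_1$ step. It requires carefully justifying the sign and scaling in the identification of $\nabla_sJ_1(0)$ with $r$ times the Hessian of the distance from $x_j$, given the $[0,1]$ parametrization and the reversed orientation. It also requires passing from the quadratic-form bounds of Lemma~\ref{L2.1} to an operator-norm bound, which uses symmetry and positive semidefiniteness. Finally, the case $r=0$ (that is, $x_i=x_j$ with $i\ne j$) needs separate treatment: there $J_{ij}$ degenerates, $\nabla_sJ_{ij}(0)$ reduces to $P_{x_ix_j}v_j-v_i$ (or is handled by continuity), and $\Theta_k(0)=1$.
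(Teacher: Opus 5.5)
Your route is essentially the paper's, written with Jacobi fields. The paper's two terms $-\operatorname{Hess}F_y(x)[u]$ and $(d_\eta\exp_x)^{-1}w$ are exactly $\nabla_s$ at $s=0$ of the Jacobi fields with endpoint data $(v_i,0)$ and $(0,v_j)$. Your $J_1$ and $J_2$ steps are correct: the identification $\nabla_sJ_1(0)=-r\nabla_{v_i^\perp}\operatorname{grad}\rho$, the passage from the quadratic-form bound to an operator-norm bound, and the Rauch estimate all hold.

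The gap is the final assembly, ``adding the three contributions.'' You bound the tangential contribution by $\|v_i\|+\|v_j\|$ and then add $\Theta_k(r)\|v_i^\perp\|+\|v_j^\perp\|$. By the triangle inequality this gives only $\Theta_k(r)(\|v_i\|+\|v_j\|)+\Theta_k(r)\|v_i^\perp\|+\|v_j^\perp\|$, which is strictly larger than your target. Even the sharper tangential bound $\|v_i^{\top}\|+\|v_j^{\top}\|$ does not rescue naive addition. Take $\mathbb R^d$ (so $\Theta_0\equiv1$), $v_j=0$, and $v_i$ at angle $\pi/4$ to $\dot\gamma_{ij}(0)$. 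Your bounds then sum to $\sqrt2\|v_i\|$, while the target is $\|v_i\|$ (and indeed $\nabla_sJ_{ij}(0)=-v_i$ there). As written, you therefore obtain the lemma only with a worse constant, e.g.\ $4M\sqrt{2\mathcal E(0)}$, not the stated $2M\sqrt{2\mathcal E(0)}$.

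The missing ingredient is orthogonality. $\nabla_sJ^{\top}(0)=P_{x_ix_j}v_j^{\top}-v_i^{\top}$ is parallel to $\dot\gamma_{ij}(0)$, while $\nabla_sJ_1(0)$ and $\nabla_sJ_2(0)$ are normal to it.

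The cleanest repair is to regroup by endpoints, as the paper implicitly does:
$\nabla_sJ_{ij}(0)=\bigl(-v_i^{\top}+\nabla_sJ_1(0)\bigr)+\bigl(P_{x_ix_j}v_j^{\top}+\nabla_sJ_2(0)\bigr)$. Each bracket is an orthogonal sum. The first therefore has norm at most $\bigl(\|v_i^{\top}\|^2+\Theta_k(r)^2\|v_i^\perp\|^2\bigr)^{1/2}\le\Theta_k(r)\|v_i\|$, and the second at most $\|v_j\|$.

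Alternatively, keep your split. Use $\|\nabla_sJ_{ij}(0)\|^2=\|\nabla_sJ^{\top}(0)\|^2+\|\nabla_sJ^{\perp}(0)\|^2$ together with the sharper tangential bound and Minkowski's inequality in $\mathbb R^2$.

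This orthogonality is exactly what lies behind the paper's one-line claim $\|\operatorname{Hess}F_y(x)[u]\|\le\Theta_k(r)\|u\|$. There $\operatorname{Hess}F_y(x)$ is the identity on the radial direction and equals $r\operatorname{Hess}\rho$ on the invariant normal space, so its operator norm is at most $\max\{1,\Theta_k(r)\}=\Theta_k(r)$. With this fix the rest of your argument goes through, including your treatment of the case $r=0$, which the paper does not address.
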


\begin{proof}
	Fix $i,j\in[N]$ and $t\ge0$. Since the case $i=j$ is trivial, we assume
	$i\ne j$. For this fixed pair and time, we write
	\[
	x:=x_i(t),
	\quad
	y:=x_j(t),
	\quad
	u:=v_i(t)\in T_x\mathcal M,
	\quad
	w:=v_j(t)\in T_y\mathcal M,
	\]
	and
	\[
	\eta:=\log_x y,
	\quad
	r:=\|\eta\|_x=d(x,y).
	\]
	By $(\mathcal P_1)$, the logarithmic vector $\log_x y$ is well-defined, and
	$d_\eta\exp_x:T_x\mathcal M\to T_y\mathcal M$ is invertible.
	Let
	\[
	F_y(z):=\frac12 d(z,y)^2.
	\]
	Since
	\[
	\operatorname{grad}F_y(x)=-\log_x y,
	\]
	differentiation with respect to the first variable gives
	\[
	-\operatorname{Hess}F_y(x)[u].
	\]
If the first variable $x$ is fixed and the second variable $y$ varies in the
direction $w\in T_y\mathcal M$, differentiating
\[
\exp_x(\log_x y)=y
\]
gives
\[
d_\eta\exp_x
\left[
D_y(\log_x)(w)
\right]
=
w,
\quad
\eta=\log_x y.
\]
Hence,
\[
D_y(\log_x)(w)
=
\bigl(d_\eta\exp_x\bigr)^{-1}w.
\]
	Therefore, the covariant chain rule for the map $(x,y)\mapsto\log_x y$ yields
	\[
	\nabla_{v_i}\log_{x_i}x_j
	=
	-\operatorname{Hess}F_y(x)[u]
	+
	\bigl(d_\eta\exp_x\bigr)^{-1}w.
	\]
	Now, we first estimate the Hessian term. By Lemma~\ref{L2.1} and $(\mathcal F_1)-(\mathcal F_2)$, the Hessian of $F_y$ satisfies
	\begin{align}\label{C1}
	\left\|
	\operatorname{Hess}F_y(x)[u]
	\right\|_x
	\le
	\Theta_k(r)\|u\|_x,
	\end{align}
	where $\Theta_k$ is the comparison coefficient defined in $(\mathcal F_2)$.
	We next estimate the inverse exponential-map term. By Lemma~\ref{L2.2} and 	$(\mathcal F_1)$, we get
	\[
	\|d_\eta\exp_x(\xi)\|_y
	\ge
	\|\xi\|_x
	\quad
	\text{for all } \xi\in T_x\mathcal M.
	\]
	Thus,
	\[
	\left\|
	\bigl(d_\eta\exp_x\bigr)^{-1}w
	\right\|_x
	\le
	\|w\|_y.
	\]
	Since $\Theta_k(r)\ge1$, we obtain
	\begin{align}\label{C2}
	\left\|
	\bigl(d_\eta\exp_x\bigr)^{-1}w
	\right\|_x
	\le
	\Theta_k(r)\|w\|_y.
	\end{align}
	Combining \eqref{C1} and \eqref{C2}, it follows that
	\[
	\left\|
	\nabla_{v_i}\log_{x_i}x_j
	\right\|_x
	\le
	\Theta_k(r)
	\left(
	\|u\|_x+\|w\|_y
	\right),
	\]
	and then we have
	\[
	\left\|
	\nabla_{v_i}\log_{x_i(t)}x_j(t)
	\right\|_{x_i(t)}
	\le
	\Theta_k(d(x_i(t),x_j(t)))
	\left(
	\|v_i(t)\|_{x_i(t)}
	+
	\|v_j(t)\|_{x_j(t)}
	\right).
	\]
	Multiplying both sides by $\phi(d(x_i(t),x_j(t)))$ and using the definition of
	$M$ in $(\mathcal F_2)$, we deduce
	\[
	\phi(d(x_i(t),x_j(t)))
	\left\|
	\nabla_{v_i}\log_{x_i(t)}x_j(t)
	\right\|_{x_i(t)}
	\le
	M
	\left(
	\|v_i(t)\|_{x_i(t)}
	+
	\|v_j(t)\|_{x_j(t)}
	\right).
	\]
	Then, employing Corollary~\ref{C3.1}, we estimate
	\[
	\phi(d(x_i(t),x_j(t)))
	\left\|
	\nabla_{v_i}\log_{x_i(t)}x_j(t)
	\right\|_{x_i(t)}
	\le
	2M\sqrt{2\mathcal E(0)}.
	\]
	Taking the maximum over $i,j\in[N]$ gives the desired first assertion.
Moreover, the equation of motion gives
	\[
	\|\nabla_{v_i}v_i(t)\|_{x_i(t)}
	\le
	\frac{\kappa}{N}
	\sum_{j=1}^{N}
	\phi(d(x_i(t),x_j(t)))
	\left\|
	\nabla_{v_i}\log_{x_i(t)}x_j(t)
	\right\|_{x_i(t)}
	\le
	2\kappa M\sqrt{2\mathcal E(0)}.
	\]
	Consequently, taking the maximum over $i\in[N]$ completes the proof.
\end{proof}

\subsection{Removal of the {a priori} assumptions}
\label{sec:3.3}
We now specialize to the $d$-dimensional hyperbolic space $\mathbb H^d$,
realized as the hyperboloid model, and verify the \textit{a priori} assumptions
$(\mathcal P_1)$ and $(\mathcal P_2)$ directly. We begin by recalling the
basic geometric formulas for the hyperboloid model, including the tangent
space, sectional curvature, covariant derivative, geodesic distance,
exponential and logarithm maps, injectivity radius, parallel transport, and
the time derivative of the transported velocity discrepancy.

\begin{lemma}
	\label{L3.3}
	\emph{\cite{A-B-H-Y,A-H-K-S}}
	\emph{(Basic geometric formulas on the hyperboloid)}
	Let
	\[
	\mathbb H^d
	:=
	\left\{
	x\in\mathbb R^{d+1}
	:
	\langle x,x\rangle_L=-1,
	\hspace{0.2cm}
	x^0>0
	\right\},
	\]
	where
	\[
	\langle x,y\rangle_L
	:=
	-x^0y^0+\sum_{\alpha=1}^d x^\alpha y^\alpha
	\]
	denotes the Lorentzian inner product, also called the Minkowski inner product. Then, the following properties hold.
	
	\begin{enumerate}
	\item
	For each $x\in\mathbb H^d$, the tangent space is given by
	\[
	T_x\mathbb H^d
	=
	\left\{
	v\in\mathbb R^{d+1}
	:
	\langle x,v\rangle_L=0
	\right\}.
	\]
	The Riemannian metric on $\mathbb H^d$ is the restriction of
	$\langle\cdot,\cdot\rangle_L$ to $T_x\mathbb H^d$. With this metric,
	$\mathbb H^d$ is a complete Riemannian manifold with constant sectional
	curvature
	\[
	\sec \equiv -1.
	\]
		\item
	Let $x(t)$ be a smooth curve on $\mathbb H^d$, and let $Z(t)$ be a smooth
	vector field along $x(t)$. Then the covariant derivative is given by
	\[
	\nabla_{\dot x}Z
	=
	\dot Z
	-
	\langle \dot x,Z\rangle_L x.
	\]
		\item
		For every $x,y\in\mathbb H^d$, the geodesic distance satisfies
		\[
		\cosh d_{\mathbb H}(x,y)
		=
		-\langle x,y\rangle_L.
		\]
		In particular, the injectivity radius of $\mathbb H^d$ is infinite:
		\[
		\operatorname{inj}_x\mathbb H^d=\infty,
		\hspace{0.2cm}
		\text{for all } x\in\mathbb H^d.
		\]
		\item
		For $x\in\mathbb H^d$ and $v\in T_x\mathbb H^d$, the exponential map is
		given by
		\[
		\exp_x(v)
		=
		\cosh(\|v\|_x)x
		+
		\sinh(\|v\|_x)\frac{v}{\|v\|_x},
		\quad v\neq 0.
		\]
		\item
		For $x,y\in\mathbb H^d$ with $x\ne y$, the logarithm map is given by
		\[
		\log_x y
		=
		\frac{d_{\mathbb H}(x,y)}
		{\sqrt{\langle x,y\rangle_L^2-1}}
		\left(
		y+\langle x,y\rangle_L x
		\right).
		\]
		\item
		For $x,y\in\mathbb H^d$, the parallel transport from
		$T_y\mathbb H^d$ to $T_x\mathbb H^d$ along the unique geodesic joining
		$y$ to $x$ is given by
		\[
		P_{xy}v
		=
		v
		+
		\frac{\langle x,v\rangle_L}
		{1-\langle x,y\rangle_L}
		(x+y),
		\quad
		v\in T_y\mathbb H^d.
		\]
		
		\item
		Let $x_i(t)$ and $x_j(t)$ be smooth curves on $\mathbb H^d$, and let
		\[
		v_i(t)=\dot x_i(t),
		\quad
		v_j(t)=\dot x_j(t).
		\]
		Then, the transported velocity discrepancy satisfies the identity
		\[
		\begin{aligned}
			&
			\frac{d}{dt}
			\|P_{x_i(t)x_j(t)}v_j(t)-v_i(t)\|_{x_i(t)}^2
			\\
			&=
			2\left\langle
			P_{x_i(t)x_j(t)}\nabla_{v_j}v_j(t)
			-
			\nabla_{v_i}v_i(t),
			P_{x_i(t)x_j(t)}v_j(t)-v_i(t)
			\right\rangle_{x_i(t)}
			\\
			&\quad
			-
			\|P_{x_i(t)x_j(t)}v_j(t)+v_i(t)\|_{x_i(t)}^2
			\frac{
				\langle x_i(t),v_j(t)\rangle_L
				+
				\langle v_i(t),x_j(t)\rangle_L
			}
			{
				1-\langle x_i(t),x_j(t)\rangle_L
			}
			\\
			&\quad
			+
			\left(
			\|v_j(t)\|_{x_j(t)}^2
			-
			\|v_i(t)\|_{x_i(t)}^2
			\right)
			\frac{
				\langle x_i(t),v_j(t)\rangle_L
				-
				\langle v_i(t),x_j(t)\rangle_L
			}
			{
				1-\langle x_i(t),x_j(t)\rangle_L
			}.
		\end{aligned}
		\]
	\end{enumerate}
\end{lemma}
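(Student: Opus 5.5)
Items (1)--(6) are classical facts about the hyperboloid model, and I would verify each directly in the ambient Minkowski space $(\mathbb R^{d+1},\langle\cdot,\cdot\rangle_L)$. For (1), differentiating $\langle x(t),x(t)\rangle_L=-1$ gives the tangent space. Positivity of $\langle\cdot,\cdot\rangle_L$ on $x^{\perp_L}$ follows from the reverse Cauchy--Schwarz inequality for timelike vectors. The value $\sec\equiv-1$ comes from the Gauss equation for the pseudo-sphere of radius $-1$. Item (2) is the $L$-orthogonal projection of $\dot Z$ onto $T_x\mathbb H^d$: since $\langle x,Z\rangle_L=0$ we have $\langle x,\dot Z\rangle_L=-\langle\dot x,Z\rangle_L$, so adding back the normal component gives $\dot Z-\langle\dot x,Z\rangle_Lx$. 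For (3)--(4), I would check that $\gamma(s)=\cosh(s)x+\sinh(s)u$ with $\|u\|=1$ satisfies $\nabla_{\dot\gamma}\dot\gamma=\ddot\gamma-\langle\dot\gamma,\dot\gamma\rangle_L\gamma=0$. This gives the exponential map and the relation $-\langle x,\gamma(s)\rangle_L=\cosh s$. Since $\exp_x$ is a diffeomorphism (Cartan--Hadamard, or by explicitly inverting it), the injectivity radius is infinite. Item (5) is then obtained by solving $\exp_x(v)=y$ for $v$: take the component of $y$ orthogonal to $x$, namely $y+\langle x,y\rangle_Lx$, whose norm is $\sqrt{\langle x,y\rangle_L^2-1}=\sinh d$, and rescale it to length $d$. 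For (6), I would check three things: the right-hand side is tangent at $x$, it is a linear isometry, and it fixes vectors orthogonal to both $x$ and $y$ while mapping $\dot\gamma$ at $y$ to $\dot\gamma$ at $x$. In constant curvature this characterizes parallel transport along the geodesic, since the transported frame consists of the velocity field and parallel normal fields lying in the fixed complement of $\mathrm{span}\{x,y\}$.

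The substantive part is (7), which I would prove by brute-force ambient differentiation. Set $x=x_i$, $y=x_j$, $c:=1-\langle x,y\rangle_L$, and $w:=P_{xy}v_j-v_i=v_j+\frac{\langle x,v_j\rangle_L}{c}(x+y)-v_i$. Because $w\in T_x\mathbb H^d$, we have $\frac{d}{dt}\|w\|^2=2\langle\dot w,w\rangle_L$, where $\dot w$ is the ordinary derivative in $\mathbb R^{d+1}$. Differentiating term by term yields $\dot v_j-\dot v_i$, a term $\frac{d}{dt}\bigl(\frac{\langle x,v_j\rangle_L}{c}\bigr)(x+y)$, and a term $\frac{\langle x,v_j\rangle_L}{c}(v_i+v_j)$. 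I would then use item (2) to substitute $\dot v_j=\nabla_{v_j}v_j+\|v_j\|^2y$ and $\dot v_i=\nabla_{v_i}v_i+\|v_i\|^2x$. Next I would write $\nabla_{v_j}v_j=P_{xy}\nabla_{v_j}v_j-\frac{\langle x,\nabla_{v_j}v_j\rangle_L}{c}(x+y)$, which produces the first term of the identity, $2\langle P\nabla_{v_j}v_j-\nabla_{v_i}v_i,w\rangle$, plus multiples of $x+y$, $x$, $y$, $v_i$ and $v_j$.

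All the remaining scalar products with $w$ can be computed from $\langle x,w\rangle_L=0$ and $\langle x+y,P_{xy}v_j\rangle_L$. The latter is $\langle y,v_j\rangle_L+\langle x,v_j\rangle_L+\frac{\langle x,v_j\rangle_L}{c}\langle x+y,x+y\rangle_L$, and it simplifies because $\langle x+y,x+y\rangle_L=-2c$. From these one also gets $\langle x+y,w\rangle_L=-\langle x,v_j\rangle_L-\langle y,v_i\rangle_L$. The main obstacle is the bookkeeping: one has to regroup the leftover terms into the two symmetric combinations $\|P v_j+v_i\|^2$ and $\|v_j\|^2-\|v_i\|^2$, with the coefficients $\frac{\langle x,v_j\rangle_L\pm\langle v_i,y\rangle_L}{c}$. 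My plan is to expand everything into the basic scalars $a=\langle x,v_j\rangle_L$, $b=\langle v_i,y\rangle_L$, $\langle v_i,v_j\rangle_L$, $\|v_i\|^2$ and $\|v_j\|^2$. Using $\|Pv_j\|=\|v_j\|$, I would write $\|Pv_j\pm v_i\|^2=\|v_j\|^2+\|v_i\|^2\pm2\langle Pv_j,v_i\rangle$ with $\langle Pv_j,v_i\rangle=\langle v_j,v_i\rangle_L+\frac{ab}{c}$. Finally I would match the coefficients of each monomial on both sides; the derivative of $c$, namely $\dot c=-(b+a)$, is what produces the $(a+b)$ factor.
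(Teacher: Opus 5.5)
Your proposal is correct. The paper gives no proof of Lemma~\ref{L3.3}; it imports all seven items from the references cited in the statement. So there is no in-paper argument to match, and your direct ambient verification is a genuinely self-contained alternative. What your route buys is an independent check of the intricate identity (7), on which the verification of $(\mathcal P_2)$ in Theorem~\ref{T3.2} rests; the citation buys only brevity.

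Items (1)--(6) go through as you describe. Your characterization of $P_{xy}$ is sound: the geodesic lies in $\mathrm{span}\{x,y\}$, so any constant vector $e$ that is $L$-orthogonal to that plane satisfies $\nabla_{\dot\gamma}e=\dot e-\langle\dot\gamma,e\rangle_L\gamma=0$. One also checks directly that $P_{xy}(x+\langle x,y\rangle_L y)=-(y+\langle x,y\rangle_L x)$.

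For (7) the bookkeeping closes. Set $a=\langle x_i,v_j\rangle_L$, $b=\langle v_i,x_j\rangle_L$, $c=1-\langle x_i,x_j\rangle_L$ and $w=P_{x_ix_j}v_j-v_i$. The scalar multiplying $x_i+x_j$ in $\dot w$ is $\frac{d}{dt}(a/c)-\langle x_i,\nabla_{v_j}v_j\rangle_L/c$. Its $\langle x_i,\nabla_{v_j}v_j\rangle_L$ contributions cancel, so you need not add that scalar to your list. Using $\langle x_i+x_j,w\rangle_L=-(a+b)$, $\langle x_i,w\rangle_L=0$ and $\langle x_i,x_j\rangle_L=1-c$, you arrive at
\[
\langle \dot w,w\rangle_L=\bigl\langle P_{x_ix_j}\nabla_{v_j}v_j-\nabla_{v_i}v_i,w\bigr\rangle_L-\frac{a+b}{c}\langle P_{x_ix_j}v_j,v_i\rangle_L-\frac{a\|v_i\|_{x_i}^2+b\|v_j\|_{x_j}^2}{c}.
\]
This is exactly half the right-hand side of (7) once $\|P_{x_ix_j}v_j+v_i\|^2$ is expanded as you propose.

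One small correction of reasoning. The formula $\frac{d}{dt}\|w\|^2=2\langle\dot w,w\rangle_L$ holds simply because the metric is the restriction of the constant ambient form, not because $w$ is tangent. Tangency, $\langle x_i,w\rangle_L=0$, is what you need afterwards to discard the $x_i$-components.
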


By Lemma~\ref{L3.3}, the injectivity radius of $\mathbb H^d$ is infinite.
Therefore, the logarithmic vector $\log_{x_i(t)}x_j(t)$ and the parallel
transport map $P_{x_i(t)x_j(t)}$ are globally well-defined for all
$i,j\in[N]$ and all $t\ge0$. Hence, the \textit{a priori} assumption
$(\mathcal P_1)$ is automatically satisfied on $\mathbb H^d$.

It remains to verify the uniform time regularity required in
$(\mathcal P_2)$. The seventh identity in Lemma~\ref{L3.3} will be employed for this
purpose. Indeed, together with the uniform velocity estimate in
Corollary~\ref{C3.1} and the uniform bound for the weighted logarithmic
interactions obtained in Lemma \ref{L3.2}, it gives uniform control of the time variation of
the transported velocity discrepancy on the hyperboloid model.

\begin{theorem}
	\label{T3.2}
	\emph{(Removal of the \textit{a priori} assumptions on $\mathbb H^d$)}
    Let the underlying manifold be $\mathbb H^d$, realized as the hyperboloid
    model, and assume that $(\mathcal F_2)$ holds with $k=-1$. Let
    $\{(x_i(t),v_i(t))\}_{i=1}^N$ be a global solution of \eqref{Main}. Then
    the solution exhibits
	interaction-weighted asymptotic velocity alignment. More precisely, we have
	\[
	\lim_{t\to\infty}
	\max_{i,j\in[N]}
	\phi(d_{\mathbb H}(x_i(t),x_j(t)))
	\|P_{x_i(t)x_j(t)}v_j(t)-v_i(t)\|_{x_i(t)}
	=
	0.
	\]
\end{theorem}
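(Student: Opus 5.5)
The plan is to reduce the statement to Theorem~\ref{T3.1}. First I check that $\mathbb H^d$ with $k=-1$ fits the general framework. Then I verify the two \emph{a priori} assumptions $(\mathcal P_1)$ and $(\mathcal P_2)$ directly.

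\textbf{Framework and $(\mathcal P_1)$.} By Lemma~\ref{L3.3}(1), $\mathbb H^d$ is complete with $\sec\equiv-1$. Hence $(\mathcal F_1)$ holds with $k=-1$, and $(\mathcal F_2)$ is assumed. By Lemma~\ref{L3.3}(3) the injectivity radius is infinite, so $\operatorname{Cut}(x)=\emptyset$ for every $x$. Therefore $\log_{x_i(t)}x_j(t)$ and $P_{x_i(t)x_j(t)}$ are defined for all $i,j$ and all $t$, which is $(\mathcal P_1)$ (as already observed after Lemma~\ref{L3.3}). Consequently Corollary~\ref{C3.1} and Lemma~\ref{L3.2} are available. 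They give
\[
\|v_i\|_{x_i}\le V:=\sqrt{2\mathcal E(0)},\qquad \|\nabla_{v_i}v_i\|_{x_i}\le A:=2\kappa MV,
\]
uniformly in $t$ and $i$.

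\textbf{Verifying $(\mathcal P_2)$.} I bound each of the three terms in the identity of Lemma~\ref{L3.3}(7).
\begin{itemize}
\item[(a)] For the first term, parallel transport is an isometry. By Cauchy--Schwarz it is at most $2(2A)(2V)=8AV$.
\item[(b)] The remaining two terms contain the fractions
\[
\frac{\langle x_i,v_j\rangle_L\pm\langle v_i,x_j\rangle_L}{1-\langle x_i,x_j\rangle_L}.
\]
To bound these, write $r=d_{\mathbb H}(x_i,x_j)$. By Lemma~\ref{L3.3}(4) applied at $x_j$, one has $x_i=\cosh(r)\,x_j+\sinh(r)\,e$ with $e\in T_{x_j}\mathbb H^d$ a unit vector. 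Since $\langle x_j,v_j\rangle_L=0$, this gives
\[
|\langle x_i,v_j\rangle_L|=\sinh(r)\,|\langle e,v_j\rangle_L|\le \sinh(r)\,V,
\]
and symmetrically $|\langle v_i,x_j\rangle_L|\le\sinh(r)\,V$.
\item[(c)] By Lemma~\ref{L3.3}(3), the denominator equals $1+\cosh r$. Since $\sinh r/(1+\cosh r)=\tanh(r/2)\le1$, each fraction is bounded by $2V$, uniformly in time and independently of how far apart the agents drift.
\end{itemize}
Combining these with $\|P_{x_ix_j}v_j+v_i\|^2\le 4V^2$ and $\bigl|\|v_j\|^2-\|v_i\|^2\bigr|\le V^2$, the derivative in $(\mathcal P_2)$ is bounded by the constant
\[
C=8AV+8V^3+2V^3,
\]
uniformly in $t$ and in $i,j$.

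\textbf{Conclusion and main obstacle.} With $(\mathcal F_1)$, $(\mathcal F_2)$, $(\mathcal P_1)$ and $(\mathcal P_2)$ in hand, Theorem~\ref{T3.1} yields the claimed limit directly. I expect the only step needing care to be (b)--(c): the uniform bound on the curvature-induced fractions, since the numerators grow like $\sinh r$ and no a priori bound on the spatial diameter is available. The decomposition of $x_i$ along the geodesic from $x_j$, together with the identity $\sinh r/(1+\cosh r)=\tanh(r/2)\le 1$, is what handles this. The bound on the first term instead relies on Lemma~\ref{L3.2}, which is exactly where the assumption $\sup_r\Theta_{-1}(r)\phi(r)<\infty$ enters.
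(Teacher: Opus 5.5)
Your proposal is correct and follows the paper's proof essentially step for step. You obtain $(\mathcal P_1)$ from the infinite injectivity radius, take the velocity and acceleration bounds from Corollary~\ref{C3.1} and Lemma~\ref{L3.2}, bound the three terms of the identity in Lemma~\ref{L3.3} using $\sinh r/(1+\cosh r)\le 1$, and conclude by Theorem~\ref{T3.1}. The only deviations are cosmetic: you bound $|\langle x_i,v_j\rangle_L|$ and $|\langle v_i,x_j\rangle_L|$ through the explicit decomposition $x_i=\cosh(r)x_j+\sinh(r)e$ instead of by differentiating $\cosh r=-\langle x_i,x_j\rangle_L$, and your constant $8AV+10V^3$ differs harmlessly from the paper's $8A_0V_0+12V_0^3$.
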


\begin{proof}
	By the third assertion of Lemma~\ref{L3.3}, the injectivity radius of $\mathbb H^d$ is infinite.
	Hence, the logarithmic interactions and the parallel transport maps are
	globally well-defined. Thus, the assumption $(\mathcal P_1)$ holds
	automatically on $\mathbb H^d$.
	We next verify $(\mathcal P_2)$. By Corollary~\ref{C3.1}, there exists a
	constant
	\[
	V_0:=\sqrt{2\mathcal E(0)}
	\]
	such that
	\[
	\|v_i(t)\|_{x_i(t)}\le V_0,
	\quad
	\text{for all } i\in[N] \text{ and } t\ge0.
	\]
Moreover, Lemma~\ref{L3.2} and the equation of motion imply the uniform
acceleration bound. Setting
\[
A_0:=2\kappa M V_0,
\qquad
M=\sup_{r\ge0} \big(r\coth r\,\phi(r)\big),
\]
we have
\[
\|\nabla_{v_i}v_i(t)\|_{x_i(t)}\le A_0,
\quad
\text{for all } i\in[N] \text{ and } t\ge0.
\]
	Fix $i,j\in[N]$. Since the case $i=j$ is trivial, we assume $i\ne j$. From the seventh
	identity in Lemma~\ref{L3.3}, we recall
	\[
	\begin{aligned}
		&
		\frac{d}{dt}
		\|P_{x_i(t)x_j(t)}v_j(t)-v_i(t)\|_{x_i(t)}^2
		\\
		&=
		2\left\langle
		P_{x_i(t)x_j(t)}\nabla_{v_j}v_j(t)
		-
		\nabla_{v_i}v_i(t),
		P_{x_i(t)x_j(t)}v_j(t)-v_i(t)
		\right\rangle_{x_i(t)}
		\\
		&\quad
		-
		\|P_{x_i(t)x_j(t)}v_j(t)+v_i(t)\|_{x_i(t)}^2
		\frac{
			\langle x_i(t),v_j(t)\rangle_L
			+
			\langle v_i(t),x_j(t)\rangle_L
		}
		{
			1-\langle x_i(t),x_j(t)\rangle_L
		}
		\\
		&\quad
		+
		\left(
		\|v_j(t)\|_{x_j(t)}^2
		-
		\|v_i(t)\|_{x_i(t)}^2
		\right)
		\frac{
			\langle x_i(t),v_j(t)\rangle_L
			-
			\langle v_i(t),x_j(t)\rangle_L
		}
		{
			1-\langle x_i(t),x_j(t)\rangle_L
		}.
	\end{aligned}
	\]
	We estimate each term on the right-hand side. Since parallel transport is an
	isometry,
	\[
	\|P_{x_i(t)x_j(t)}v_j(t)-v_i(t)\|_{x_i(t)}
	\le
	\|v_j(t)\|_{x_j(t)}+\|v_i(t)\|_{x_i(t)}
	\le
	2V_0
	\]
	and
	\[
	\|P_{x_i(t)x_j(t)}v_j(t)+v_i(t)\|_{x_i(t)}
	\le
	2V_0.
	\]
	Therefore, we see
	\begin{align}\label{C4}
	\begin{aligned}
		&
		2\left|
		\left\langle
		P_{x_i(t)x_j(t)}\nabla_{v_j}v_j(t)
		-
		\nabla_{v_i}v_i(t),
		P_{x_i(t)x_j(t)}v_j(t)-v_i(t)
		\right\rangle_{x_i(t)}
		\right|
		\\
		&\quad \le
		2\left(
		\|\nabla_{v_j}v_j(t)\|_{x_j(t)}
		+
		\|\nabla_{v_i}v_i(t)\|_{x_i(t)}
		\right)
		\|P_{x_i(t)x_j(t)}v_j(t)-v_i(t)\|_{x_i(t)}
		\\
		&\quad \le
		8A_0V_0.
	\end{aligned}
	\end{align}
It remains to estimate the two fractional terms appearing in the identity of
Lemma~\ref{L3.3}. For the sake of simplicity, we set
\[
r(t):=d_{\mathbb H}(x_i(t),x_j(t)).
\]
Since
\[
-\langle x_i(t),x_j(t)\rangle_L=\cosh r(t),
\]
we have
\[
1-\langle x_i(t),x_j(t)\rangle_L
=
1+\cosh r(t).
\]
Differentiating
\[
-\langle x_i(t),x_j(t)\rangle_L=\cosh r(t)
\]
with respect to time gives
\[
-\langle v_i(t),x_j(t)\rangle_L
-
\langle x_i(t),v_j(t)\rangle_L
=
\sinh r(t)\frac{d}{dt}r(t).
\]
Hence, we find
\[
\begin{aligned}
	&
	\left|
	\frac{
		\langle x_i(t),v_j(t)\rangle_L
		+
		\langle v_i(t),x_j(t)\rangle_L
	}
	{
		1-\langle x_i(t),x_j(t)\rangle_L
	}
	\right|
 =
	\frac{\sinh r(t)}{1+\cosh r(t)}
	\left|
	\frac{d}{dt}r(t)
	\right|
 \le
	\left|
	\frac{d}{dt}r(t)
	\right|.
\end{aligned}
\]
Moreover, using the following relation:
\[
\left|
\frac{d}{dt}r(t)
\right|
\le
\|P_{x_i(t)x_j(t)}v_j(t)-v_i(t)\|_{x_i(t)},
\]
we obtain
\[
\left|
\frac{
	\langle x_i(t),v_j(t)\rangle_L
	+
	\langle v_i(t),x_j(t)\rangle_L
}
{
	1-\langle x_i(t),x_j(t)\rangle_L
}
\right|
\le
\|P_{x_i(t)x_j(t)}v_j(t)-v_i(t)\|_{x_i(t)}
\le
2V_0.
\]
Similarly, by considering the distance derivative with one endpoint fixed, we
obtain
\[
\frac{|\langle x_i(t),v_j(t)\rangle_L|}
{1-\langle x_i(t),x_j(t)\rangle_L}
\le
\frac{\sinh r(t)}{1+\cosh r(t)}
\|v_j(t)\|_{x_j(t)}
\le
\|v_j(t)\|_{x_j(t)}
\le
V_0,
\]
and
\[
\frac{|\langle v_i(t),x_j(t)\rangle_L|}
{1-\langle x_i(t),x_j(t)\rangle_L}
\le
\frac{\sinh r(t)}{1+\cosh r(t)}
\|v_i(t)\|_{x_i(t)}
\le
\|v_i(t)\|_{x_i(t)}
\le
V_0.
\]
Thus,
\[
\left|
\frac{
	\langle x_i(t),v_j(t)\rangle_L
	-
	\langle v_i(t),x_j(t)\rangle_L
}
{
	1-\langle x_i(t),x_j(t)\rangle_L
}
\right|
\le
2V_0.
\]
	Consequently, the second term is bounded by
	\begin{align}\label{C5}
	\|P_{x_i(t)x_j(t)}v_j(t)+v_i(t)\|_{x_i(t)}^2
	\left|
	\frac{
		\langle x_i(t),v_j(t)\rangle_L
		+
		\langle v_i(t),x_j(t)\rangle_L
	}
	{
		1-\langle x_i(t),x_j(t)\rangle_L
	}
	\right|
	\le
	8V_0^3,
	\end{align}
	and the third term is bounded by
	\begin{align}\label{C6}
	\left|
	\|v_j(t)\|_{x_j(t)}^2
	-
	\|v_i(t)\|_{x_i(t)}^2
	\right|
	\left|
	\frac{
		\langle x_i(t),v_j(t)\rangle_L
		-
		\langle v_i(t),x_j(t)\rangle_L
	}
	{
		1-\langle x_i(t),x_j(t)\rangle_L
	}
	\right|
	\le
	4V_0^3.
	\end{align}
	Combining \eqref{C4}, \eqref{C5}, and \eqref{C6}, we estimate
	\[
	\left|
	\frac{d}{dt}
	\|P_{x_i(t)x_j(t)}v_j(t)-v_i(t)\|_{x_i(t)}^2
	\right|
	\le
	8A_0V_0+12V_0^3.
	\]
	Thus $(\mathcal P_2)$ holds with
\[
C=8A_0V_0+12V_0^3,
\]
uniformly for all $i,j\in[N]$ and $t\ge0$.
	Therefore, all assumptions of Theorem~\ref{T3.1} are satisfied without imposing
	$(\mathcal P_1)$ and $(\mathcal P_2)$ as \textit{a priori} assumptions.
	Applying Theorem~\ref{T3.1} yields
	\[
	\lim_{t\to\infty}
	\max_{i,j\in[N]}
	\phi(d_{\mathbb H}(x_i(t),x_j(t)))
	\|P_{x_i(t)x_j(t)}v_j(t)-v_i(t)\|_{x_i(t)}
	=
	0.
	\]
	This completes the proof.
\end{proof}

\section{Conclusion}
\label{sec:4}
We introduced a logarithmic velocity alignment model on Riemannian manifolds in
which pairwise coupling is generated by the covariant time derivative of
logarithmic displacement. The resulting energy identity reveals a direct link
between curvature and dissipation: nonpositive sectional curvature gives the
index form the sign needed for kinetic-energy decay, while a lower curvature
bound controls the growth of the logarithmic interaction through comparison
estimates.

Within this framework, we proved interaction-weighted asymptotic velocity
alignment for global solutions under two auxiliary assumptions ensuring global
well-definedness of the logarithmic interactions and uniform time regularity of
transported velocity discrepancies. On hyperbolic space, the global geometry,
together with the energy and interaction estimates, verifies both assumptions
directly. This yields the corresponding alignment theorem without imposing them
separately.

Natural extensions include discrete-time schemes that preserve the geometric
dissipation mechanism, mean-field and kinetic limits on Riemannian manifolds,
and models with additional interaction or external-force terms. These questions
require controlling the logarithmic coupling beyond the finite-dimensional
energy estimates developed here.

\end{document}